\newcommand{\arxiv}[1]{\href{http://arxiv.org/abs/#1}{arXiv:#1}}
\theoremstyle{plain}
\newtheorem{thm}{Theorem}[section]
\newtheorem{lem}[thm]{Lemma}
\newtheorem{supp}[thm]{Supplement}
\theoremstyle{definition}
\newtheorem{ex}[thm]{Example}
\theoremstyle{remark}
\newtheorem{rem}[thm]{Remark}
\newcommand{\M}{M}
\newcommand{\dM}{M^*}
\newcommand{\T}{T}
\newcommand{\Mh}{\bm{M}_H}
\newcommand{\dMh}{\bm{M}_H^*}
\newcommand{\Th}{\bm{T}_H}
\newcommand{\Me}{\bm{M}}
\newcommand{\dMe}{\bm{M}^*}
\newcommand{\Te}{\bm{T}}
\begin{document}

\title[An Assmus--Mattson theorem]{An Assmus--Mattson theorem for codes over commutative association schemes}
\author{John Vincent S. Morales}
\address{Graduate School of Information Sciences, Tohoku University, Sendai, Japan}
\email{moralesjohnvince@ims.is.tohoku.ac.jp}
\author{Hajime Tanaka}
\address{\href{http://www.math.is.tohoku.ac.jp/}{Research Center for Pure and Applied Mathematics}, Graduate School of Information Sciences, Tohoku University, Sendai, Japan}
\email{htanaka@tohoku.ac.jp}
\keywords{Assmus--Mattson theorem, code, design, association scheme, Terwilliger algebra, multivariable polynomial interpolation}
\subjclass[2010]{05E30, 94B05, 05B05} 
\begin{abstract}
We prove an Assmus--Mattson-type theorem for block codes where the alphabet is the vertex set of a commutative association scheme (say, with $s$ classes).
This in particular generalizes the Assmus--Mattson-type theorems for $\mathbb{Z}_4$-linear codes due to Tanabe (2003) and Shin, Kumar, and Helleseth (2004), as well as the original theorem by Assmus and Mattson (1969).
The weights of a code are $s$-tuples of non-negative integers in this case, and the conditions in our theorem for obtaining $t$-designs from the code involve concepts from polynomial interpolation in $s$ variables.
The Terwilliger algebra is the main tool to establish our results.
\end{abstract}

\maketitle

\section{Introduction}

We begin by recalling the famous \emph{Assmus--Mattson theorem} which relates linear codes and combinatorial designs:

\begin{thm}[Assmus and Mattson {\cite[Theorem 4.2]{AM1969JCT}}]\label{original AMT}
Let $C$ be a linear code of length $n$ over $\mathbb{F}_q$ with minimum weight $\delta$.
Let $C^{\perp}$ denote the dual code of $C$, with minimum weight $\delta^*$.
Suppose that an integer $t$ $(1\leqslant t\leqslant n)$ is such that there are at most $\delta-t$ weights of $C^{\perp}$ in $\{1,2,\dots,n-t\}$, or such that there are at most $\delta^*-t$  weights of $C$ in $\{1,2,\dots,n-t\}$.
Then the supports of the words of any fixed weight in $C$ form a $t$-design (with possibly repeated blocks).
\end{thm}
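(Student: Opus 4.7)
The plan is to fix an arbitrary $t$-subset $T \subseteq \{1, \ldots, n\}$ and show that
$$
A_{w}(T) := \#\{c \in C : \mathrm{wt}(c) = w,\ T \subseteq \mathrm{supp}(c)\}
$$
depends only on $t$ and $w$, which is the desired $t$-design property. By MacWilliams duality the two hypotheses of the theorem are interchangeable under the swap $C \leftrightarrow C^{\perp}$, so I would treat only the first.

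The central object would be the shortened dual code $C^{\perp}_T := \{c \in C^\perp : c_i = 0 \text{ for all } i \in T\}$, viewed as a linear code of length $n - t$ on $\{1, \ldots, n\} \setminus T$. Its nonzero weights are weights of $C^\perp$ lying in $\{1, \ldots, n-t\}$, hence at most $\delta - t$ in number by hypothesis, and the admissible set of such weights is the same for every $T$. Since $t \leqslant \delta - 1$, the projection $\pi_T \colon C \to \mathbb{F}_q^{n-t}$ is injective, and a standard identification gives $\pi_T(C) = (C^{\perp}_T)^{\perp}$ inside the length-$(n-t)$ ambient space; the MacWilliams transform then relates the weight enumerators of these two codes.

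To recover $A_{w}(T)$ rather than just the marginal weight distribution of $\pi_T(C)$, I would work with the \emph{split} (bivariate) weight enumerator of $C$ with respect to the partition $T \sqcup ([n] \setminus T)$: it tracks, for each codeword $c$, the joint data $(\mathrm{wt}(c|_T), \mathrm{wt}(c|_{[n] \setminus T}))$, from which $A_w(T)$ can be read off directly in the binary case and with a routine modification for general $q$. This split enumerator satisfies a bivariate MacWilliams identity. The few-weights hypothesis confines the $C^\perp$-side split enumerator to a narrow band in the $[n] \setminus T$-degree, while the minimum distance $\delta$ of $C$ constrains the $C$-side enumerator in low total degree. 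Inverting the resulting square system should produce explicit formulas for $A_w(T)$ in terms of $t$, $w$, and the global weight distribution of $C^\perp$ alone.

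The main obstacle is this final inversion step: verifying that the combined local and global constraints pin down every needed joint coefficient uniformly in $T$. For ordinary codes this reduces to a Vandermonde-type non-degeneracy inside a single MacWilliams transform, and the admissible set of weights being $T$-independent is what makes the system square. In the commutative-association-scheme setting of the present paper, weights become $s$-tuples and the split enumerator is a multivariable object, so the same inversion turns into a genuine multivariable polynomial-interpolation problem. I expect this to be precisely the juncture at which the Terwilliger algebra and the polynomial-interpolation apparatus announced in the abstract become indispensable.
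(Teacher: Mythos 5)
Your route is genuinely different from the paper's. The paper never proves Theorem \ref{original AMT} by a direct coding-theoretic argument: it recovers it as the $s=1$ special case of Theorem \ref{general AMT} (via Theorem \ref{Hamming AMT}), whose proof runs entirely through the Terwilliger algebra of the Hamming scheme. There, Lemma \ref{characterization of designs} recasts the $t$-design property as orthogonality of $\hat{C}$ to the non-primary irreducible $\Th$-modules in $U_0$ of endpoint at most $t$, and the few-weights hypothesis is used to build an interpolating polynomial in the dual adjacency matrices whose tridiagonal action forces $\pi_W\bm{E}_{\alpha}^*\hat{C}=0$. What you propose instead is the classical shortening/puncturing argument, essentially Assmus and Mattson's original proof, and it can be made to work. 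The paper's route buys uniformity: linearity of $C$ is not needed (cf.\ Theorem \ref{Hamming AMT}), and the mechanism survives the passage to $s$-class alphabets, where ``number of weights'' must be replaced by an interpolation degree. Your route buys elementarity and, in principle, explicit formulas for the design parameters.

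That said, the step you yourself flag as the main obstacle is where the sketch is genuinely incomplete, and the difficulty does not sit quite where you place it. Determining the weight distribution of the shortened dual code $C^{\perp}_T$ is indeed a single Vandermonde-type inversion: its dual in length $n-t$ is the punctured code $\pi_T(C)$, whose minimum weight is at least $\delta-t$, so the first $\delta-t$ power-moment identities for $C^{\perp}_T$ have universal right-hand sides; with at most $\delta-t$ admissible nonzero weights drawn from a $T$-independent set, the system is square and nonsingular, and $|C^{\perp}_T|=|C^{\perp}|/q^t$ is also $T$-independent since $t<\delta$. But this controls only the stratum of the split enumerator of $C^{\perp}$ with $\operatorname{wt}(c|_T)=0$. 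The bivariate MacWilliams identity requires the full joint distribution of $\bigl(\operatorname{wt}(c|_T),\operatorname{wt}(c|_{[n]\setminus T})\bigr)$ over $C^{\perp}$, and the strata with $\operatorname{wt}(c|_T)=j>0$ are not pinned down by the hypotheses in one shot; so ``a Vandermonde-type non-degeneracy inside a single MacWilliams transform'' understates what is needed. The standard repair is an induction on $t$ (equivalently, running the shortening argument over all subsets $T'\subseteq T$ and recovering $A_w(T)$ by inclusion--exclusion from the counts of codewords whose supports avoid $T'$). With that induction supplied, your argument closes; without it, the ``square system'' you describe is not actually square.
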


\noindent
We remark that \cite[Theorem 4.2]{AM1969JCT} also includes a criterion for obtaining simple $t$-designs, but we will not pay much attention in this paper to the simplicity of the resulting designs.
There are several proofs and strengthenings of Theorem \ref{original AMT}; see, e.g., \cite{CDS1991IEEE,CD1993SIAM,Simonis1995LAA,Bachoc1999DCC,Tanabe2001DCC,KP2003DCC,Tanaka2009EJC}.
The purpose of this paper is to establish a theorem which unifies many of the known generalizations and extensions of Theorem \ref{original AMT}.

Constructing $t$-designs from codes received renewed interest when Gulliver and Harada \cite{GH1999DM} and Harada \cite{Harada1998JCD} found new $5$-designs by computer from the lifted Golay code of length $24$ over $\mathbb{Z}_4$ (among others).
Their constructions were later explained and generalized further by Bonnecaze, Rains, and Sol\'{e} \cite{BRS2000JSPI}.
Motivated by these results, Tanabe \cite{Tanabe2000IEEE} obtained an Assmus--Mattson-type theorem for $\mathbb{Z}_4$-linear codes with respect to the symmetrized weight enumerator.
Tanabe's theorem can indeed capture the $5$-designs from the lifted Golay code over $\mathbb{Z}_4$, but the conditions in his theorem involve finding the ranks of matrices having quite complicated entries, so that it is hard to verify the conditions without the help of a computer.
Tanabe \cite{Tanabe2003DCC} then presented a simpler version of his theorem, and we can easily check its conditions by hand for the lifted Golay code over $\mathbb{Z}_4$.

To be somewhat concrete, by an \emph{Assmus--Mattson-type} theorem, we mean in this paper a theorem which enables us to find $t$-designs by just looking at some kind of weight enumerator of a code (plus a bit of extra information in some cases, e.g., linearity).
Such a theorem is not always the best way to estimate the parameter $t$ of the resulting designs as it does not take into account the structure of the code at all (cf.~Remark \ref{Goethals codes}), but instead it has a great advantage in its wide range of applicability.

When working with the Hamming weight enumerator as in Theorem \ref{original AMT}, we are dealing with codes in the \emph{Hamming association schemes}. 
(Formal definitions will begin in Section \ref{sec: preliminaries}.)
Hamming association schemes are examples of \emph{metric} and \emph{cometric} association schemes, and Theorem \ref{original AMT} can be interpreted and generalized from this point of view; cf.~\cite{Tanaka2009EJC}.
On the other hand, in situations where we focus on a more complicated type of weight enumerator of a block code as in \cite{Tanabe2000IEEE,Tanabe2003DCC}, we think of the code in question (say, of length $n$) as lying in a structure much finer than a Hamming association scheme; that is to say, the alphabet itself naturally becomes the vertex set of a commutative association scheme with $s$ classes where $s\geqslant 2$, and we consider its \emph{extension} of length $n$.
Hamming association schemes are the same thing as extensions of $1$-class (i.e., trivial) association schemes, but if $s\geqslant 2$ then its extensions are no longer metric nor cometric.

In this paper, we prove a general Assmus--Mattson-type theorem for codes in extensions of arbitrary commutative association schemes.
Our main results are Theorem \ref{general AMT} and Supplements \ref{1st supp}--\ref{upper bound}.
In general, the weights of a code take the form $\alpha=(\alpha_1,\alpha_2,\dots,\alpha_s)$, where the $\alpha_i$ are non-negative integers such that $\sum_{i=1}^s\alpha_i\leqslant n$.
We count the \emph{number} of weights in a given interval when $s=1$ as in Theorem \ref{original AMT}, but if $s\geqslant 2$ then instead we speak of the \emph{minimal degree} of subspaces of the polynomial ring $\mathbb{R}[\xi_1,\xi_2,\dots,\xi_s]$ which allow unique Lagrange interpolation with respect to those weights (which are lattice points in $\mathbb{R}^s$) contained in a given region.
When specialized to the case of $\mathbb{Z}_4$-linear codes with the symmetrized weight enumerator as in \cite{Tanabe2000IEEE,Tanabe2003DCC}, the association scheme on the alphabet $\mathbb{Z}_4$ has $2$ classes $R_1$ and $R_2$, together with the identity class $R_0$, defined by
\begin{equation*}
	(x,y)\in R_i \, \Longleftrightarrow \, y-x=\pm i \ (\bmod 4) \quad (x,y\in\mathbb{Z}_4)
\end{equation*}
for $i\in\{0,1,2\}$, and our results give a slight extension of Tanabe's theorem in \cite{Tanabe2003DCC}.
The Assmus--Mattson-type theorem for $\mathbb{Z}_4$-linear codes with the Hamming weight enumerator due to Shin, Kumar, and Helleseth \cite{SKH2004DCC} can also be recovered.
To prove our results, we make heavy use of the representation theory of the \emph{Terwilliger algebra} \cite{Terwilliger1992JAC,Terwilliger1993JACa,Terwilliger1993JACb}, which is a non-commutative semisimple matrix $\mathbb{C}$-algebra attached to each vertex of an association scheme.
See, e.g., \cite{Schrijver2005IEEE,GST2006JCTA,Tanaka2009EJC,BBST2015EJC} for more applications of the Terwilliger algebra to coding theory and design theory.

The layout of this paper is as follows.
Section \ref{sec: preliminaries} collects necessary notation, definitions, and results concerning commutative association schemes.
In Section \ref{sec: main results}, we state our main results.
Section \ref{sec: proofs} is devoted to their proofs.
Finally, we discuss a number of examples in Section \ref{sec: examples}.

\section{Preliminaries}\label{sec: preliminaries}

We refer the reader to \cite{Delsarte1973PRRS,BI1984B,BCN1989B,DL1998IEEE,MT2009EJC} for more background information.
In this paper, $\mathbb{N}$ will denote the set of \emph{non-negative} integers:
\begin{equation*}
	\mathbb{N}=\{0,1,2,\dots\}.
\end{equation*}

\subsection{Commutative association schemes and their Terwilliger algebras}

Let $X$ be a finite set, and let $V$ be a complex vector space with a distinguished basis $\{\hat{x}:x\in X\}$ and a Hermitian inner product $\langle\hat{x},\hat{y}\rangle=\delta_{xy}$ $(x,y\in X)$.
For every subset $C$ of $X$, we let $\hat{C}=\sum_{x\in C}\hat{x}\in V$ denote its characteristic vector.
We will naturally identify $\operatorname{End}(V)$ with the $\mathbb{C}$-algebra of complex matrices with rows and columns indexed by $X$.
The adjoint (or conjugate-transpose) of $A\in\operatorname{End}(V)$ will be denoted by $A^{\dagger}$.
Let $\mathcal{R}=\{R_0,R_1,\dots,R_s\}$ be a set of non-empty binary relations on $X$.
For each $i$, let $A_i\in\operatorname{End}(V)$ be the $0$-$1$ adjacency matrix of the graph $(X,R_i)$ (directed, in general).
The pair $(X,\mathcal{R})$ is called a \emph{commutative association scheme} with $s$ classes if
\begin{enumerate}[({A}S1)]
\item\label{AS1} $A_0=I$, the identity matrix;
\item\label{AS2} $\sum_{i=0}^sA_i=J$, the all ones matrix;
\item\label{AS3} $A_i^{\dagger}\in\{A_0,A_1,\dots,A_s\}$ for $0\leqslant i\leqslant s$;
\item\label{AS4} $A_iA_j=A_jA_i\in \M :=\sum_{k=0}^s \mathbb{C}A_k$ for $0\leqslant i,j\leqslant s$.
\end{enumerate}

For the rest of this paper, we will always assume that $(X,\mathcal{R})$ is a commutative association scheme with $s$ classes.
It follows from (AS\ref{AS1}), (AS\ref{AS2}), and (AS\ref{AS4}) that the linear subspace $\M$ of $\operatorname{End}(V)$ is an $(s+1)$-dimensional commutative $\mathbb{C}$-algebra, called the \emph{Bose--Mesner algebra} of $(X,\mathcal{R})$.
By (AS\ref{AS3}), $\M$ is closed under $^{\dagger}$, so that it is semisimple and has a basis $\{E_i\}_{i=0}^s$ consisting of the primitive idempotents, i.e., $E_iE_j=\delta_{ij}E_i$, $\sum_{i=0}^sE_i=I$.
We will always set
\begin{equation*}
	E_0=|X|^{-1}J.
\end{equation*}
We note that the $E_i$ are Hermitian positive semidefinite matrices.
By (AS\ref{AS2}), $\M$ is also closed under entrywise (or \emph{Hadamard} or \emph{Schur}) multiplication, denoted $\circ$.
The $A_i$ are the primitive idempotents of $\M$ with respect to this multiplication, i.e., $A_i\circ A_j=\delta_{ij}A_i$, $\sum_{i=0}^sA_i=J$.

The \emph{intersection numbers} $p_{ij}^k$ and the \emph{Krein parameters} $q_{ij}^k$ $(0 \leqslant i,j,k \leqslant s)$ of $(X,\mathcal{R})$ are defined by the equations
\begin{equation*}
	A_i A_j = \sum_{k=0}^s p_{ij}^k A_k, \qquad E_i \circ E_j = |X|^{-1} \sum_{k=0}^s q_{ij}^k E_k.
\end{equation*}
Clearly, the $p_{ij}^k$ are non-negative integers.
On the other hand, since $E_i\circ E_j$ (being a principal submatrix of $E_i\otimes E_j$) is positive semidefinite, it follows that the $q_{ij}^k$ are real and non-negative.

The change-of-basis matrices $P$ and $Q$ are defined by
\begin{equation}\label{eigenmatrices}
	A_i=\sum_{j=0}^sP_{ji}E_j, \qquad E_i=|X|^{-1}\sum_{j=0}^sQ_{ji}A_j.
\end{equation}
In particular,
\begin{equation}\label{P, Q are invertible}
	PQ=QP=|X|I.
\end{equation}
We refer to $P$ and $Q$ as the \emph{first} and the \emph{second eigenmatrix} of $(X,\mathcal{R})$, respectively.
Note that $P_{0i}$ is the degree (both in and out) of the regular graph $(X,R_i)$, and that $Q_{0i}$ is the rank of $E_i$.
Moreover, we have
\begin{equation}\label{0th columns}
	P_{i0}=Q_{i0}=1 \quad (0\leqslant i\leqslant s).
\end{equation}

We recall the Terwilliger algebra.
Fix a ``base vertex'' $x_0\in X$, and define the diagonal matrices $E_i^*=E_i^*(x_0)$, $A_i^*=A_i^*(x_0)$ $(0\leqslant i\leqslant s)$ in $\operatorname{End}(V)$ by
\begin{equation*}
	(E_i^*)_{xx}=(A_i)_{x_0x}, \quad (A_i^*)_{xx}=|X|(E_i)_{x_0x} \quad (x\in X).
\end{equation*}
Note that $E_i^*E_j^*=\delta_{ij}E_i^*$, $\sum_{i=0}^sE_i^*=I$, and moreover
\begin{equation*}
	A_i^*A_j^*=\sum_{k=0}^s q_{ij}^k A_k^*, \qquad A_i^*=\sum_{j=0}^s Q_{ji} E_j^*.
\end{equation*}
The $E_i^*$ and the $A_i^*$ are called the \emph{dual idempotents} and the \emph{dual adjacency matrices} of $(X,\mathcal{R})$ with respect to $x_0$, respectively.
They form two bases of the \emph{dual Bose--Mesner algebra} $\dM=\dM(x_0)$ of $(X,\mathcal{R})$ with respect to $x_0$.
The \emph{Terwilliger} (or \emph{subconstituent}) \emph{algebra} $\T=\T(x_0)$ of $(X,\mathcal{R})$ with respect to $x_0$ is the $\mathbb{C}$-subalgebra of $\operatorname{End}(V)$ generated by $\M$ and $\dM$ \cite{Terwilliger1992JAC,Terwilliger1993JACa,Terwilliger1993JACb}.
The following are relations in $\T$ (cf.~\cite[Lemma 3.2]{Terwilliger1992JAC}):
\begin{equation}\label{triple product relations}
	E_i^*A_jE_k^*=0 \, \Longleftrightarrow \, p_{ij}^k=0; \qquad E_iA_j^*E_k=0 \, \Longleftrightarrow \, q_{ij}^k=0.
\end{equation}
Since $\T$ is closed under $^{\dagger}$, it is semisimple and any two non-isomorphic irreducible $\T$-modules in the \emph{standard module} $V$ are orthogonal.
Define a partition
\begin{equation}\label{partition}
	X=X_0 \sqcup X_1 \sqcup \dots \sqcup X_s
\end{equation}
by
\begin{equation*}
	X_i=\{x\in X:(x_0,x)\in R_i\} \quad (0\leqslant i\leqslant s).
\end{equation*}
Then, since $\hat{X}_i=A_i^{\dagger}\hat{x}_0=E_i^*\hat{X}$ for every $0\leqslant i\leqslant s$, it is immediate to see that the $(s+1)$-dimensional subspace
\begin{equation*}
	\sum_{i=0}^s \mathbb{C}\hat{X}_i=\M\hat{x}_0=\dM\hat{X}
\end{equation*}
is an irreducible $\T$-module, called the \emph{primary} $\T$-\emph{module}.
It is the unique irreducible $\T$-module in $V$ containing the $1$-dimensional subspaces $E_0V$ and $E_0^*V$.

Let $C$ be a subset of $X$.
To avoid triviality, we call $C$ a \emph{code} if $1<|C|<|X|$.
For the moment, assume that $C$ is a code.
The \emph{inner distribution} of $C$ is the vector $a=(a_0,a_1,\dots,a_s)\in\mathbb{R}^{s+1}$ defined by
\begin{equation*}
	a_i=|C|^{-1}\langle \hat{C},A_i\hat{C}\rangle=|C|^{-1}\cdot |R_i\cap (C\times C)| \quad (0\leqslant i\leqslant s).
\end{equation*}
Observe that (cf.~\eqref{P, Q are invertible}, \eqref{0th columns})
\begin{equation*}
	a_0=1, \quad \sum_{i=0}^sa_i=|C|, \quad (aQ)_0=|C|, \quad \sum_{i=0}^s(aQ)_i=|X|.
\end{equation*}
Clearly, the $a_i$ are non-negative.
On the other hand, from \eqref{eigenmatrices} it follows that
\begin{equation}\label{meaning of aQ}
	\langle \hat{C},E_i\hat{C}\rangle=|X|^{-1}|C|\, (aQ)_i \quad (0\leqslant i\leqslant s).
\end{equation}
Since the $E_i$ are positive semidefinite, it follows that the $(aQ)_i$ are also non-negative.
Delsarte's famous \emph{linear programming bound} \cite{Delsarte1973PRRS} on the sizes of codes is based on this simple observation.
The vector $aQ\in\mathbb{R}^{s+1}$ is often referred to as the \emph{MacWilliams transform} of $a$.
We remark the following:
\begin{equation*}
	(aQ)_i=0 \, \Longleftrightarrow \, E_i\hat{C}=0.
\end{equation*}

\subsection{Translation association schemes}

Suppose that $X$ is endowed with the structure of an abelian group (written additively) with identity element $0$.
We call $(X,\mathcal{R})$ a \emph{translation association scheme} \cite[\S 2.10]{BCN1989B} if for all $0\leqslant i\leqslant s$ and $z\in X$, $(x,y)\in R_i$ implies $(x+z,y+z)\in R_i$.

For the rest of this section, assume that $(X,\mathcal{R})$ is a translation association scheme.
In this context, we will \emph{always} choose $0$ as the base vertex.
(Note that the automorphism group of $(X,\mathcal{R})$ is transitive on $X$.)
Observe that
\begin{equation*}
	R_i=\{(x,y)\in X\times X:y-x\in X_i\} \quad (0\leqslant i\leqslant s).
\end{equation*}
Let $X^*$ be the character group of $X$ with identity element $\iota$.
To each $\varepsilon\in X^*$ we associate the vector
\begin{equation*}
	\hat{\varepsilon}=|X|^{-1/2}\sum_{x\in X}\overline{\varepsilon(x)}\,\hat{x} \in V,
\end{equation*}
so that
\begin{equation}\label{inner product for two bases}
	\langle\hat{x},\hat{\varepsilon}\rangle=|X|^{-1/2}\varepsilon(x) \quad (x\in X,\, \varepsilon\in X^*).
\end{equation}
Note that the $\hat{\varepsilon}$ form an orthonormal basis of $V$ by the orthogonality relations for the characters.
Moreover, it follows that
\begin{equation*}
	A_i\hat{\varepsilon}=\left(\sum_{x\in X_i}\overline{\varepsilon(x)}\right)\hat{\varepsilon} \quad (0\leqslant i\leqslant s,\ \varepsilon\in X^*).
\end{equation*}
This shows that each of the $\hat{\varepsilon}$ is an eigenvector for $\M$, and hence belongs to one of the $E_iV$.
Thus, we have a partition
\begin{equation*}
	X^*=X_0^* \sqcup X_1^* \sqcup \dots \sqcup X_s^*,
\end{equation*}
given by
\begin{equation*}
	X_i^*=\{\varepsilon\in X^*:\hat{\varepsilon}\in E_iV\} \quad (0\leqslant i\leqslant s).
\end{equation*}
Note that $X_0^*=\{\iota\}$, and that
\begin{equation}\label{orthogonal projections}
	E_i=\sum_{\varepsilon\in X_i^*}\hat{\varepsilon}\,\hat{\varepsilon}^{\dagger} \quad (0\leqslant i\leqslant s).
\end{equation}
Define the set $\mathcal{R}^*=\{R_0^*,R_1^*,\dots,R_s^*\}$ of non-empty binary relations on $X^*$ by
\begin{equation*}
	R_i^*=\{(\varepsilon,\eta) \in X^* \times X^* :\eta\varepsilon^{-1}\in X_i^*\} \quad (0\leqslant i\leqslant s).
\end{equation*}
Then it follows from the orthogonality relations and \eqref{orthogonal projections} that
\begin{equation*}
	A_i^*=\sum_{(\varepsilon,\eta)\in R_i^*}\!\!\hat{\varepsilon}\,\hat{\eta}^{\dagger} \quad (0\leqslant i\leqslant s).
\end{equation*}
In other words, the matrix representing $A_i^*$ with respect to the orthonormal basis $\{\hat{\varepsilon}:\varepsilon\in X^*\}$ of $V$ is precisely the $0$-$1$ adjacency matrix of the graph $(X^*,R_i^*)$.
It turns out that the pair $(X^*,\mathcal{R}^*)$ is again a translation association scheme, called the \emph{dual} of $(X,\mathcal{R})$.
In particular, the $q_{ij}^k$ are the intersection numbers of $(X^*,\mathcal{R}^*)$, so that these are again non-negative integers in this case.
We also note that $(X^*,\mathcal{R}^*)$ has eigenmatrices $P^*=Q$ and $Q^*=P$, and that
\begin{equation*}
	P_{ji}=\sum_{x\in X_i}\overline{\varepsilon(x)} \quad (\varepsilon\in X_j^*),  \qquad Q_{ji}=\sum_{\varepsilon\in X_i^*}\varepsilon(x)  \quad (x\in X_j).
\end{equation*}
We will view $V$ together with the basis $\{\hat{\varepsilon}:\varepsilon\in X^*\}$ as the standard module for $(X^*,\mathcal{R}^*)$, and choose $\iota$ as the base vertex.

A code $C$ in $X$ is called an \emph{additive code} if it is a subgroup of $X$.
Assume for the moment that $C$ is an additive code, and let $a=(a_0,a_1,\dots,a_s)$ be its inner distribution.
Observe that
\begin{equation*}
	a_i=|X_i\cap C| \quad (0\leqslant i\leqslant s),
\end{equation*}
and hence $a$ is also called the \emph{weight distribution} of $C$ in this case.
The \emph{dual code} of $C$ is the subgroup $C^{\perp}$ in $X^*$ defined by
\begin{equation*}
	C^{\perp}=\{\varepsilon\in X^*: \varepsilon(x)=1 \ \text{for all} \ x\in C\}.
\end{equation*}
From \eqref{inner product for two bases} it follows that
\begin{equation}\label{how characteristic vectors are related}
	\hat{C}=|X|^{-1/2}\,|C|\sum_{\varepsilon\in C^{\perp}}\!\hat{\varepsilon}.
\end{equation}
In other words, $\hat{C}$ is a scalar multiple of the characteristic vector of $C^{\perp}$ with respect to the basis $\{\hat{\varepsilon}:\varepsilon\in X^*\}$.
We now observe that
\begin{equation}\label{dual distribution}
	\langle \hat{C},E_i\hat{C}\rangle=|X|^{-1}|C|^2\cdot |X_i^*\cap C^{\perp}| \quad (0\leqslant i\leqslant s).
\end{equation}
In particular, combining this with \eqref{meaning of aQ}, we have
\begin{equation*}
	|X_i^*\cap C^{\perp}|=|C|^{-1} (aQ)_i \quad (0\leqslant i\leqslant s),
\end{equation*}
so that $|C|^{-1} (aQ)$ gives the weight distribution of $C^{\perp}$.

The group operation on $X^*$ is multiplicative.
In many cases (cf.~Section \ref{sec: examples}), we fix a (non-canonical) isomorphism $X\rightarrow X^*$ $(x \mapsto \varepsilon_x)$ such that
\begin{equation}\label{symmetric form}
	\varepsilon_x(y)=\varepsilon_y(x) \quad (x,y\in X).
\end{equation}
Then the dual code of an additive code in $X$ becomes again an \emph{additive} code in $X$.

See \cite[Chapter 6]{Delsarte1973PRRS}, \cite[\S 2.10]{BCN1989B}, and \cite[\S 6]{MT2009EJC} for more details about translation association schemes.

\subsection{Extensions of commutative association schemes and Hamming association schemes}

For the rest of this paper, we will fix an integer $n$ at least $2$.
Delsarte \cite[\S 2.5]{Delsarte1973PRRS} gave a construction of a new commutative association scheme from $(X,\mathcal{R})$ with vertex set $X^n$ as follows.
For a sequence $\alpha=(\alpha_1,\alpha_2,\dots,\alpha_s)\in \mathbb{N}^s$, let $|\alpha|=\sum_{i=1}^s \alpha_i$.
For any two vertices $\bm{x}=(x_1,x_2,\dots,x_n),\bm{y}=(y_1,y_2,\dots,y_n)\in X^n$, define the \emph{composition} of $\bm{x},\bm{y}$ to be the vector $c(\bm{x},\bm{y})=(c_1,c_2,\dots,c_s)\in \mathbb{N}^s$, where
\begin{equation*}
	c_i=|\{\ell:(x_{\ell},y_{\ell})\in R_i \}| \quad (1\leqslant i\leqslant s).
\end{equation*}
It is clear that $|c(\bm{x},\bm{y})|\leqslant n$.
For every $\alpha\in\mathbb{N}^s$ with $|\alpha|\leqslant n$, define the binary relation $\bm{R}_{\alpha}$ on $X^n$ by
\begin{equation*}
	\bm{R}_{\alpha}=\{ (\bm{x},\bm{y})\in X^n\times X^n: c(\bm{x},\bm{y})=\alpha \}.
\end{equation*}
Let
\begin{equation*}
	\operatorname{Sym}^n(\mathcal{R})=\{\bm{R}_{\alpha}:\alpha\in\mathbb{N}^s,\, |\alpha|\leqslant n \}.
\end{equation*}
Then it follows that the pair $(X^n,\operatorname{Sym}^n(\mathcal{R}))$ is a commutative association scheme, called the \emph{extension} of $(X,\mathcal{R})$ of length $n$.
We will identify its standard module with $V^{\otimes n}$, so that $\hat{\bm{x}}:=\hat{x}_1\otimes\hat{x}_2\otimes\dots\otimes\hat{x}_n$ for $\bm{x}=(x_1,x_2,\dots,x_n)\in X^n$.
For every $\alpha=(\alpha_1,\alpha_2,\dots,\alpha_s) \in \mathbb{N}^s$ with $|\alpha|\leqslant n$, the $0$-$1$ adjacency matrix $\bm{A}_{\alpha}\in\operatorname{End}(V^{\otimes n})$ of the graph $(X^n,\bm{R}_{\alpha})$ is then given by
\begin{equation}\label{tensor expression for adjacency matrices}
	\bm{A}_{\alpha}=\!\sum_{i_1,i_2,\dots,i_n} \!\!\! A_{i_1}\otimes A_{i_2} \otimes \dots \otimes A_{i_n},
\end{equation}
where the sum is over $i_1,i_2,\dots,i_n\in\mathbb{N}$ such that
\begin{equation*}
	\{i_1,i_2,\dots,i_n\}=\{0^{n-|\alpha|},1^{\alpha_1},2^{\alpha_2},\dots,s^{\alpha_s}\}
\end{equation*}
as multisets.
In particular, the Bose--Mesner algebra $\Me$ of $(X^n,\operatorname{Sym}^n(\mathcal{R}))$ coincides with the $n^{\mathrm{th}}$ symmetric tensor space of $M$.
Similar expressions hold for the primitive idempotents, dual idempotents, and the dual adjacency matrices of $(X^n,\operatorname{Sym}^n(\mathcal{R}))$, denoted henceforth by the $\bm{E}_{\alpha},\bm{E}_{\alpha}^*$, and the $\bm{A}_{\alpha}^*$, respectively.
For simplicity, we will always choose $\bm{x}_0:=(x_0,x_0,\dots,x_0)\in X^n$ as the base vertex.
We denote the corresponding dual Bose--Mesner algebra and the Terwilliger algebra by $\dMe$ and $\Te$, respectively.
We also consider the partition
\begin{equation*}
	X^n=\!\bigsqcup_{\substack{\alpha\in\mathbb{N}^s \\ |\alpha|\leqslant n}} \! (X^n)_{\alpha}
\end{equation*}
corresponding to \eqref{partition}, i.e.,
\begin{equation*}
	(X^n)_{\alpha}=\{\bm{x}\in X^n:(\bm{x}_0,\bm{x})\in \bm{R}_{\alpha}\}.
\end{equation*}

Let $\{\bm{e}_i:1\leqslant i\leqslant s\}$ be the standard basis of $\mathbb{R}^s$.
Then in view of \eqref{0th columns}, we have
\begin{equation}\label{standard generators}
	\bm{A}_{\bm{e}_i}=\sum_{\substack{\alpha\in\mathbb{N}^s \\ |\alpha|\leqslant n}} \!\left(\sum_{j=0}^s \alpha_jP_{ji}\right) \! \bm{E}_{\alpha}, \quad \bm{A}_{\bm{e}_i}^*=\sum_{\substack{\alpha\in\mathbb{N}^s \\ |\alpha|\leqslant n}} \!\left(\sum_{j=0}^s \alpha_jQ_{ji}\right) \! \bm{E}_{\alpha}^*,
\end{equation}
where $\alpha_0:=n-|\alpha|$.
More generally, Mizukawa and Tanaka \cite{MT2004PAMS} described the eigenmatrices of $(X^n,\operatorname{Sym}^n(\mathcal{R}))$ in terms of certain $s$-variable hypergeometric orthogonal polynomials which generalize the Krawtchouk polynomials.
See also \cite{IT2012TAMS,Iliev2012CM}.
Let $\bm{p}_{\alpha\beta}^{\gamma}$ (resp.~$\bm{q}_{\alpha\beta}^{\gamma}$) denote the intersection numbers (resp.~Krein parameters) of $(X^n,\operatorname{Sym}^n(\mathcal{R}))$.
Then, for all $1\leqslant i\leqslant s$ and $\beta,\gamma\in\mathbb{N}^s$ with $|\beta|,|\gamma|\leqslant n$, we have
\begin{equation}\label{recurrence}
	\bm{p}_{\bm{e}_i\beta}^{\gamma}\!\ne 0 \, \Longleftrightarrow \, \gamma\in \bigl\{\beta-\bm{e}_j+\bm{e}_k: p_{ij}^k\ne 0 \bigr\},
\end{equation}
where we set $\bm{e}_0:=0$.
A similar result holds for the $\bm{q}_{\bm{e}_i\beta}^{\gamma}$.

Let $\xi=(\xi_0,\xi_1,\dots,\xi_s)$ be a sequence of $s+1$ variables.
For every $\alpha\in \mathbb{N}^s$ with $|\alpha|\leqslant n$, we let
\begin{equation}\label{monomials}
	\xi^{\alpha}=\xi_0^{n-|\alpha|}\xi_1^{\alpha_1}\xi_2^{\alpha_2}\dots\xi_s^{\alpha_s}.
\end{equation}
Then it follows from \eqref{tensor expression for adjacency matrices} that
\begin{equation*}
	\left(\sum_{i=0}^s \xi_i A_i \right)^{\!\!\otimes n}\!\!=\sum_{\substack{\alpha\in\mathbb{N}^s \\ |\alpha|\leqslant n}} \!\xi^{\alpha} \bm{A}_{\alpha},
\end{equation*}
and similarly for the $\bm{E}_{\alpha}$.
Observe that
\begin{equation*}
	\sum_{i=0}^s \xi_i E_i=|X|^{-1}\sum_{i=0}^s (\xi Q^{\mathsf{T}})_i A_i.
\end{equation*}
Combining these comments, we have (cf.~\cite{Tarnanen1987P,Godsil2010M})
\begin{equation}\label{generating function identity}
	\sum_{\substack{\alpha\in\mathbb{N}^s \\ |\alpha|\leqslant n}} \!\xi^{\alpha} \bm{E}_{\alpha}=|X|^{-n}\!\sum_{\substack{\alpha\in\mathbb{N}^s \\ |\alpha|\leqslant n}} \!(\xi Q^{\mathsf{T}})^{\alpha} \bm{A}_{\alpha}.
\end{equation}
(Here, we extend the notation \eqref{monomials} to the sequence $\xi Q^{\mathsf{T}}$ as well.)

Now, let $C$ be a code in $X^n$ with inner distribution $a=(a_{\alpha})_{\alpha\in\mathbb{N}^s,\,|\alpha|\leqslant n}$.
Consider the polynomial $w_C(\xi)$ in $\mathbb{R}[\xi]=\mathbb{R}[\xi_0,\xi_1,\dots,\xi_s]$ defined by
\begin{equation*}
	w_C(\xi)=\sum_{\substack{\alpha\in\mathbb{N}^s \\ |\alpha|\leqslant n}} \!a_{\alpha} \xi^{\alpha}.
\end{equation*}
Note that $w_C(\xi)$ is homogeneous of degree $n$.
From \eqref{generating function identity} it follows that
\begin{equation}\label{change of variables in weight enumerator}
	|C|^{-1}\!\sum_{\substack{\alpha\in\mathbb{N}^s \\ |\alpha|\leqslant n}} \! \langle \hat{C},\bm{E}_{\alpha}\hat{C}\rangle \xi^{\alpha}=|X|^{-n}\, w_C(\xi Q^{\mathsf{T}}).
\end{equation}
Hence we can read which of the $\bm{E}_{\alpha}\hat{C}$ vanish from the expansion of $w_C(\xi Q^{\mathsf{T}})$.

Suppose for the moment that $(X,\mathcal{R})$ is a translation association scheme, and that $C$ is an additive code in $X^n$.
In this case, $w_C(\xi)$ is called the \emph{weight enumerator} of $C$.
It should be remarked that $(X^n,\operatorname{Sym}^n(\mathcal{R}))$ and $(X^{*n},\operatorname{Sym}^n(\mathcal{R}^*))$ are dual to each other.
By \eqref{dual distribution} and \eqref{change of variables in weight enumerator} we have (cf.~\cite{Godsil2010M})
\begin{equation*}
	w_{C^{\perp}}(\xi)=|C|^{-1}\,w_C(\xi Q^{\mathsf{T}}).
\end{equation*}
This generalizes the well-known \emph{MacWilliams identity}.

In proving our results, we also need to consider a special fusion of $(X^n,\operatorname{Sym}^n(\mathcal{R}))$ called the \emph{Hamming association scheme} $H(n,|X|)$, which is defined to be the extension of length $n$ of the $1$-class association scheme $(X,\{R_0,(X\times X)\backslash R_0\})$.
Observe that $H(n,|X|)$ has $n$ classes, and that the associated matrices as well as the partition of the vertex set $X^n$ are parametrized by the integers $0,1,\dots,n$, i.e., $\bm{A}_i,\bm{E}_i,\bm{E}_i^*,\bm{A}_i^*$, and also $(X^n)_i$ ($0\leqslant i\leqslant n$).
We denote the corresponding Bose--Mesner algebra, the dual Bose--Mesner algebra, and the Terwilliger algebra by $\Mh,\dMh$, and $\Th$, respectively.
Note that
\begin{equation}\label{two generators}
	\bm{A}_1=\sum_{i=0}^n \theta_i \bm{E}_i, \quad \bm{A}_1^*=\sum_{i=0}^n \theta_i^* \bm{E}_i^*,
\end{equation}
where
\begin{equation*}
	\theta_i=\theta_i^*=n(|X|-1)-|X|i \quad (0\leqslant i\leqslant n).
\end{equation*}

Below we collect important facts about the irreducible $\Th$-modules, most of which can be found in Terwilliger's lecture notes \cite{Terwilliger2010N}.
See also \cite[\S 5.1]{TTW2016pre}.
(Some of the results hold in the wider class of \emph{metric} and \emph{cometric} association schemes.)

\begin{lem}\label{properties of irreducible modules}
Let $W$ be an irreducible $\Th$-module.
\begin{enumerate}[(i)]
\item $\bm{A}_1\bm{E}_i^*W\subset \bm{E}_{i-1}^*W+\bm{E}_i^*W+\bm{E}_{i+1}^*W$ $(0\leqslant i\leqslant n)$, where $\bm{E}_{-1}^*=\bm{E}_{n+1}^*=0$.
\item $\bm{A}_1^*\bm{E}_iW\subset \bm{E}_{i-1}W+\bm{E}_iW+\bm{E}_{i+1}W$ $(0\leqslant i\leqslant n)$, where $\bm{E}_{-1}=\bm{E}_{n+1}=0$.
\item There are non-negative integers $r$ and $d$ such that
\begin{equation}\label{range of r and d}
	n-2r \leqslant d \leqslant n-r,
\end{equation}
and
\begin{equation*}
	\dim \bm{E}_i^*W=\dim\bm{E}_iW=\begin{cases} 1 & \text{if} \ r\leqslant i\leqslant r+d, \\ 0 & \text{otherwise}, \end{cases} \quad (0\leqslant i\leqslant n).
\end{equation*}
\item $\bm{E}_i^*\bm{A}_1\bm{E}_j^*W\ne 0$ if $|i-j|=1$ \ $(r\leqslant i,j\leqslant r+d)$.
\item $\bm{E}_i\bm{A}_1^*\bm{E}_jW\ne 0$ if $|i-j|=1$ \ $(r\leqslant i,j\leqslant r+d)$.
\end{enumerate}
\end{lem}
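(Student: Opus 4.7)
My plan is to derive parts (i) and (ii) directly from the triple product relations \eqref{triple product relations}. The scheme $H(n,|X|)$ is $P$-polynomial with respect to $\bm{A}_1$, so its intersection numbers $\bm{p}_{1j}^k$ vanish unless $|j-k|\leqslant 1$; hence \eqref{triple product relations} gives $\bm{E}_i^*\bm{A}_1\bm{E}_j^*=0$ whenever $|i-j|\geqslant 2$, and inserting $I=\sum_i\bm{E}_i^*$ on the left of $\bm{A}_1\bm{E}_j^*W$ yields (i). Part (ii) is entirely parallel: $H(n,|X|)$ is $Q$-polynomial with respect to $\bm{A}_1^*$, so its Krein parameters $\bm{q}_{1j}^k$ vanish unless $|j-k|\leqslant 1$, giving $\bm{E}_i\bm{A}_1^*\bm{E}_j=0$ unless $|i-j|\leqslant 1$.

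Parts (iv) and (v) come out of a submodule argument once (i) and (ii) are in hand. Suppose, for contradiction, that $\bm{E}_{i+1}^*\bm{A}_1\bm{E}_i^*W=0$ for some $r\leqslant i<r+d$. Each subspace $\bm{E}_j^*W$ is $\bm{M}_H^*$-invariant (since $\bm{A}_1^*=\sum_k\theta_k^*\bm{E}_k^*$ generates $\bm{M}_H^*$ and acts on $\bm{E}_j^*W$ by the scalar $\theta_j^*$), and by (i) together with the vanishing hypothesis the subspace $U=\bigoplus_{j\leqslant i}\bm{E}_j^*W$ is stable under $\bm{A}_1$. Since $\bm{A}_1$ and $\bm{M}_H^*$ together generate $\bm{T}_H$, $U$ is a proper non-zero $\bm{T}_H$-submodule of $W$, contradicting irreducibility. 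The vanishing $\bm{E}_{i-1}^*\bm{A}_1\bm{E}_i^*W=0$ is ruled out by the analogous argument with $\bigoplus_{j\geqslant i}\bm{E}_j^*W$, and (v) follows by exchanging the roles of $(\bm{A}_1,\bm{E}_j^*)$ and $(\bm{A}_1^*,\bm{E}_j)$.

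For (iii), set $r=\min\{i:\bm{E}_i^*W\neq 0\}$, let $r+d$ be the maximum such $i$, and define $r^*$ and $r^*+d^*$ analogously using the $\bm{E}_i$. The same submodule argument shows that $\bm{E}_i^*W\neq 0$ for \emph{every} $r\leqslant i\leqslant r+d$ and $\bm{E}_iW\neq 0$ for $r^*\leqslant i\leqslant r^*+d^*$. Thinness ($\dim\bm{E}_i^*W=\dim\bm{E}_iW=1$ on the supports) and the matching $d=d^*$, $r=r^*$ are classical for the Terwilliger algebra of a Hamming scheme (see \cite{Terwilliger1992JAC,Terwilliger2010N}); granting these, $(\bm{A}_1|_W,\bm{A}_1^*|_W)$ is a Leonard pair, and a direct computation of its tridiagonal coefficients from the self-dual eigenvalues $\theta_i=\theta_i^*=n(|X|-1)-|X|i$ in \eqref{two generators}, together with the non-negativity of these coefficients inherited from the Hermitian inner product on $V^{\otimes n}$, yields the range \eqref{range of r and d}.

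The main obstacle is (iii), and specifically the range \eqref{range of r and d}: thinness and self-duality are classical, but pinning down the exact endpoint constraints requires an honest Leonard-pair calculation with the Hamming parameters, for which I would rely on Terwilliger's lecture notes \cite{Terwilliger2010N} rather than reproducing the computation from scratch.
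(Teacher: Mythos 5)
The paper offers no proof of this lemma at all: it is quoted as a collection of known facts, with the reader referred to Terwilliger's lecture notes \cite{Terwilliger2010N} and to \cite[\S 5.1]{TTW2016pre}. Your proposal therefore does strictly more than the paper. Your derivations of (i) and (ii) from the triple product relations \eqref{triple product relations}, using that $H(n,|X|)$ is $P$-polynomial in $\bm{A}_1$ and (being self-dual) $Q$-polynomial in $\bm{A}_1^*$, are correct, as is the submodule argument for (iv), (v) and for the fact that the supports $\{i:\bm{E}_i^*W\ne 0\}$ and $\{i:\bm{E}_iW\ne 0\}$ are intervals: the space $\bigoplus_{j\leqslant i}\bm{E}_j^*W$ is $\dMh$-invariant, is $\bm{A}_1$-invariant by (i) together with the assumed vanishing, and $\bm{A}_1$ and $\dMh$ generate $\Th$ since $\Mh=\mathbb{C}[\bm{A}_1]$; irreducibility then forces the claim. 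Where your argument stops being self-contained is exactly where the paper also leans on the literature, namely part (iii): thinness, the coincidence of the $\bm{E}_i^*$- and $\bm{E}_i$-supports, and above all the inequality $n-2r\leqslant d$ (non-negativity of the displacement $2r+d-n$, which is the substantive content of \eqref{range of r and d}, the upper bound $d\leqslant n-r$ being immediate from the support lying in $\{0,1,\dots,n\}$). Your sketch of how a Leonard-pair computation with the eigenvalues $\theta_i=\theta_i^*=n(|X|-1)-|X|i$ would deliver this bound is plausible but not carried out; since you explicitly defer to \cite{Terwilliger2010N} for it, your treatment of (iii) is on the same footing as the paper's, and the overall proposal is acceptable as a proof modulo the same external references the authors themselves invoke.
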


\noindent
The integers $r$ and $d$ in (iii) above are called the \emph{endpoint} and the \emph{diameter} of $W$, respectively.
The integer $2r+d-n$ is called the \emph{displacement} \cite{Terwilliger2005GC} of $W$.
From \eqref{range of r and d} it follows that
\begin{equation*}
	0 \leqslant 2r+d-n \leqslant n.
\end{equation*}
For every $0\leqslant c\leqslant n$, let $U_c$ be the span of the irreducible $\Th$-modules in $V^{\otimes n}$ with displacement $c$.
Then we have
\begin{equation*}
	V^{\otimes n}=\bigoplus_{c=0}^n U_c.
\end{equation*}
This is called the \emph{displacement decomposition} of $V^{\otimes n}$.
Terwilliger \cite{Terwilliger2010N} showed that
\begin{equation}\label{structure of U_0}
	U_0=(\mathbb{C}\hat{x}_0+\mathbb{C}\hat{X})^{\otimes n}.
\end{equation}

\section{Main results}\label{sec: main results}

We recall some concepts from polynomial interpolation; cf.~\cite{GS2000ACM}.
Let $S$ be a finite set of points in $\mathbb{R}^s$.
A linear subspace $\mathscr{L}$ of the polynomial ring $\mathbb{R}[\xi_1,\xi_2,\dots,\xi_s]$ is called an \emph{interpolation space} with respect to $S$ if, for every $f\in \mathbb{R}[\xi_1,\xi_2,\dots,\xi_s]$, there exists a unique $g\in\mathscr{L}$ such that $f(z)=g(z)$ for all $z=(z_1,z_2,\dots,z_s)\in S$.
It is called a \emph{minimal degree} interpolation space if, moreover, this $g$ always satisfies $\deg f\geqslant \deg g$.

Let $\mathscr{M}(S)$ denote a minimal degree interpolation space with respect to $S$, and let
\begin{equation*}
	\mu(S)=\max\{\deg f:f\in\mathscr{M}(S)\}.
\end{equation*}
We note that $\mathscr{M}(S)$ exists; see Theorem \ref{existence of minimal one} below.
Observe also that $\mu(S)$ is well-defined, i.e., it is independent of the choice of $\mathscr{M}(S)$.

We retain the notation of Section \ref{sec: preliminaries}.
For $\bm{x}=(x_1,x_2,\dots,x_n)\in X^n$, let
\begin{equation*}
	\operatorname{supp}(\bm{x})=\{\ell:x_{\ell}\ne x_0\}\subset\{1,2,\dots,n\}.
\end{equation*}
We call $\operatorname{supp}(\bm{x})$ the \emph{support} of $\bm{x}$ (with respect to $\bm{x}_0=(x_0,x_0,\dots,x_0)$).

\begin{thm}\label{general AMT}
Let $C$ be a code in $X^n$.
Let
\begin{equation*}
	S_r=\{\alpha\in\mathbb{N}^s:r\leqslant |\alpha|\leqslant n-r,\, \bm{E}_{\alpha}^*\hat{C}\ne 0\} \quad (1\leqslant r\leqslant\lfloor n/2\rfloor),
\end{equation*}
and let
\begin{equation*}
	\delta^*=\min\{i\ne 0:\bm{E}_i\hat{C}\ne 0\}.
\end{equation*}
Suppose that an integer $t$ $(1\leqslant t\leqslant n)$ is such that
\begin{equation}\label{inequality}
	\mu(S_r)<\delta^*-r \quad (1\leqslant r\leqslant t).
\end{equation}
Then the multiset
\begin{equation}\label{these are t-designs}
	\{\operatorname{supp}(\bm{x}):\bm{x}\in (X^n)_{\alpha}\cap C\}
\end{equation}
is a $t$-design (with block size $|\alpha|$) for every $\alpha\in\mathbb{N}^s$ with $|\alpha|\leqslant n$.
\end{thm}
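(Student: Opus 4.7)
My plan is to translate the $t$-design conclusion into a vanishing statement about projections of $\hat{C}$ onto irreducible modules of the Hamming-scheme Terwilliger algebra $\Th$, and then to force those vanishings using the hypothesis $\mu(S_r)<\delta^*-r$ together with the containment $\Th\subseteq\Te$ and the polynomial structure of the extension-scheme dual Bose--Mesner algebra $\dMe$.

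The first ingredient is a standard characterization (cf.~\cite{Tanaka2009EJC}). The incidence vectors $\chi_T=\sum_{\bm{x}:\,T\subseteq\operatorname{supp}(\bm{x})}\hat{\bm{x}}$ for $T\subseteq\{1,\dots,n\}$ all lie in $U_0$ of \eqref{structure of U_0}, and consequently a vector $\hat{D}\in\bm{E}_w^{*,H}V^{\otimes n}$ yields a $t$-design of supports if and only if $\pi_W(\hat{D})=0$ for every non-primary irreducible $\Th$-submodule $W\subseteq U_0$ with endpoint $r\in\{1,\dots,t\}$ (such a $W$ automatically has diameter $n-2r$). Applied to $\hat{D}=\bm{E}_\alpha^*\hat{C}$, the theorem reduces to showing $\pi_W(\bm{E}_\alpha^*\hat{C})=0$ for every such $W$ and every $\alpha\in\mathbb{N}^s$.

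I would then argue by contradiction: suppose $\pi_W(\bm{E}_{\alpha^*}^*\hat{C})\ne 0$ for some non-primary $W\subseteq U_0$ of endpoint $r\le t$ and some $\alpha^*$. By Lemma \ref{properties of irreducible modules} the subspaces $\bm{E}_i^HW$ and $\bm{E}_j^{*,H}W$ are one-dimensional for $r\le i,j\le n-r$, linked by tridiagonal ladders $\bm{A}_1^H,\bm{A}_1^{*,H}\in\Th$. A short adjoint computation using $\bm{E}_{|\alpha^*|}^{*,H}W=0$ when $|\alpha^*|\notin[r,n-r]$ forces $\alpha^*\in S_r$. Now the interpolation hypothesis enters: by minimal-degree Lagrange interpolation there is a polynomial $f\in\mathbb{R}[\xi_1,\dots,\xi_s]$ of total degree $\le\mu(S_r)$ with $f(\alpha)=\delta_{\alpha,\alpha^*}$ on $S_r$. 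The linear substitution $\xi_i\mapsto\sum_j\beta_jQ_{ji}$ converts $f$ into an operator $F\in\dMe$ of the same degree in the generators $\bm{A}_{\bm{e}_1}^*,\dots,\bm{A}_{\bm{e}_s}^*$, acting diagonally as $F\bm{E}_\beta^*=f(\beta)\bm{E}_\beta^*$. Using the identity $\bm{A}_1^{*,H}=\sum_{i=1}^s\bm{A}_{\bm{e}_i}^*$ (which follows from \eqref{standard generators} and the row-sum relation $\sum_iQ_{ji}=|X|\delta_{j0}$ for $Q$), composing $F$ with $r$ applications of $\bm{A}_1^{*,H}$ produces a word in $\dMe$ of total $\bm{A}_{\bm{e}_i}^*$-degree $\le\mu(S_r)+r<\delta^*$; acting this word on $\hat{C}$ and exploiting the rigid ladder structure of $W$ yields, on one side, a nonzero scalar multiple of $\pi_W(\bm{E}_{\alpha^*}^*\hat{C})$ and, on the other side, zero coming from the dual-distance vanishings $\bm{E}_j^H\hat{C}=0$ for $1\le j<\delta^*$---a contradiction.

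The main obstacle, and the technical heart of the argument, will be this explicit construction and the accompanying non-vanishing verification: one must show that the $r$ ladder moves combined with $F$ do not cancel the isolating effect of $F$, and that the eventual evaluation lands inside a Hamming eigenspace where $\hat{C}$ is forbidden. The non-commutativity of $\dMe$ and $\Mh$ must be handled with care, and the strict inequality $\mu(S_r)<\delta^*-r$ is used in the tightest possible way, matching $r$ ladder degrees from $\Th$ plus $\mu(S_r)$ interpolation degrees from $\dMe$ against the dual-distance wall at $\delta^*$.
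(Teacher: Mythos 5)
Your outline follows essentially the same route as the paper's proof: reduce the $t$-design property to the vanishing of $\pi_W\bm{E}_{\alpha}^*\hat{C}$ for the non-primary irreducible $\Th$-modules $W\subset U_0$ of endpoint $r\leqslant t$ (this is Lemma \ref{characterization of designs}), observe that only $\alpha\in S_r$ matter, replace $\bm{E}_{\alpha}^*$ by a polynomial of degree at most $\mu(S_r)$ in the $\bm{A}_{\bm{e}_i}^*$ via minimal-degree interpolation on $S_r$, and play the degree budget $\mu(S_r)+r$ against the dual-distance wall at $\delta^*$ using the tridiagonal action of the $\bm{A}_{\bm{e}_i}^*$ on the Hamming eigenspaces together with the ladder structure of $W$. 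Your identity $\bm{A}_1^*=\sum_{i=1}^s\bm{A}_{\bm{e}_i}^*$ and the claim that a module in $U_0$ of endpoint $r$ has diameter $n-2r$ are both correct.

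There is, however, one genuine gap: the step asserting that "the other side is zero coming from the dual-distance vanishings $\bm{E}_j\hat{C}=0$ for $1\leqslant j<\delta^*$" does not account for the component $\bm{E}_0\hat{C}\neq 0$. A word of degree less than $\delta^*$ in the $\bm{A}_{\bm{e}_i}^*$ applied to $\hat{C}$ does \emph{not} vanish on the low Hamming eigenspaces: it picks up a contribution from $\bm{E}_0\hat{C}$ that lands in $\sum_{j\leqslant\mu(S_r)}\bm{E}_jV^{\otimes n}$, and since the hypothesis only gives $\mu(S_r)<\delta^*-r$ (not $2\mu(S_r)<\delta^*$), this range can overlap both $\bm{E}_rW$ and the range $\sum_{j\geqslant\delta^*-\mu(S_r)}\bm{E}_jV^{\otimes n}$ carrying the contribution you want, so degree counting alone cannot separate them. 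What rescues the argument is that $\dMe\,\bm{E}_0V^{\otimes n}$ is contained in the primary $\Te$-module $\Me\hat{\bm{x}}_0$, and that this module is orthogonal to every non-primary irreducible $\Th$-module in $U_0$; this is Lemma \ref{orthogonal} of the paper, proved by an explicit computation of $\pi_{U_0}\bm{A}_{\alpha}\hat{\bm{x}}_0$, and it is not automatic from $\Th\subset\Te$. The paper packages this by projecting first onto the orthogonal complement $U$ of $\Me\hat{\bm{x}}_0$, noting $\pi_U\hat{C}\in\sum_{i\geqslant\delta^*}\bm{E}_iV^{\otimes n}$ and $\pi_W=\pi_W\pi_U$ as in \eqref{dual distance} and \eqref{repeated projections}, and only then applying the interpolating operator. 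With this ingredient supplied, your argument closes and coincides with the paper's.
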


We use Theorem \ref{general AMT} together with the following ``supplements''.

\begin{supp}\label{1st supp}
Let $C$ be a code in $X^n$.
Assume that we are given in advance a set $K\subset\mathbb{N}^s$ such that the multiset \eqref{these are t-designs} is a $t$-design for every $\alpha\in K$.
Then the condition \eqref{inequality} in Theorem \ref{general AMT} may be replaced by
\begin{equation*}
	\mu(S_r\backslash K)<\delta^*-r \quad (1\leqslant r\leqslant t).
\end{equation*}
\end{supp}

We call a subset $C$ of $X^n$ a \emph{weakly $t$-balanced array}\footnote{This term is meant as only provisional; cf.~\cite{SC1973P}.} over $(X,\mathcal{R})$ (with respect to $\bm{x}_0$) if, for any $\Lambda\subset\{1,2,\dots,n\}$ and $\gamma\in\mathbb{N}^s$ such that $|\gamma|\leqslant |\Lambda| \leqslant t$, the number
\begin{equation*}
	\bigl|\bigl\{\bm{x}\in C:(x_{\ell})_{\ell\in\Lambda}\in (X^{|\Lambda|})_{\gamma}\bigr\}\bigr|
\end{equation*}
depends only on $|\Lambda|$ and $\gamma$.

Recall that, when considering a translation association scheme, we always choose the identity as the base vertex.

\begin{supp}\label{2nd supp}
Suppose that $(X,\mathcal{R})$ is a translation association scheme, and that $C$ is an additive code in $X^n$.
Assume that we are given in advance a set $L\subset\mathbb{N}^s$ such that, for every $\alpha\in L$, $(X^{*n})_{\alpha}\cap C^{\perp}$ is a weakly $t$-balanced array over $(X^*,\mathcal{R}^*)$.
Then the scalar $\delta^*$ in Theorem \ref{general AMT} may be replaced by
\begin{equation}\label{modified dual distance}
	\min\{|\alpha|:0\ne \alpha\in\mathbb{N}^s\backslash L,\, \bm{E}_{\alpha}\hat{C}\ne 0\}.
\end{equation}
\end{supp}

We note that, in the particular case where $\alpha\in\mathbb{N}^s$ is of the form $\alpha=h\bm{e}_i$ for some $h>0$, the condition that $(X^{*n})_{\alpha}\cap C^{\perp}$ is a weakly $t$-balanced array over $(X^*,\mathcal{R}^*)$ is equivalent to saying that the multiset
\begin{equation*}
	\{\operatorname{supp}(\bm{\varepsilon}):\bm{\varepsilon}\in (X^{*n})_{\alpha}\cap C^{\perp}\}
\end{equation*}
is a $t$-design.

Supplement \ref{upper bound} below was inspired by \cite[Theorem 2]{Tanabe2003DCC}, and allows us to estimate $\mu(S)$, and hence $t$, by geometrical considerations; see Section \ref{sec: examples}.
It is a general result about minimal degree interpolation spaces, so that we give a proof right after the statement.

\begin{supp}\label{upper bound}
Let $S$ be a finite set of points in $\mathbb{R}^s$.
Suppose that there are real scalars $z_{i\ell}$ $(1\leqslant i\leqslant s,\, \ell\in\mathbb{N})$, a positive integer $m$, and a linear automorphism $\sigma\in\operatorname{GL}(\mathbb{R}^s)$ such that $z_{ik}\ne z_{i\ell}$ whenever $k\ne \ell$, and that
\begin{equation}\label{transformation}
	\sigma(S)\subset \left\{(z_{1\alpha_1},z_{2\alpha_2},\dots,z_{s\alpha_s})\in\mathbb{R}^s:\alpha\in\mathbb{N}^s,\,|\alpha|\leqslant m \right\}.
\end{equation}
Then $\mu(S)\leqslant m$.
\end{supp}

\begin{proof}
We abbreviate $z_{\alpha}:=(z_{1\alpha_1},z_{2\alpha_2},\dots,z_{s\alpha_s})$.
Let $\Sigma$ denote the RHS in \eqref{transformation}.
It suffices to show that $\mu(\Sigma)\leqslant m$.
To this end, we construct an interpolation space with respect to $\Sigma$ with maximum degree at most $m$ as follows.
Let $\alpha\in\mathbb{N}^s$ be given with $|\alpha|\leqslant m$, and assume that we have constructed polynomials
\begin{equation*}
	f_{\beta}\in\mathbb{R}[\xi_1,\xi_2,\dots,\xi_s] \quad (\beta\in\mathbb{N}^s,\,|\alpha|<|\beta|\leqslant m)
\end{equation*}
such that $\deg f_{\beta}\leqslant m$ and
\begin{equation*}
	f_{\beta}(z_{\gamma})=\delta_{\beta\gamma} \quad (\gamma\in\mathbb{N}^s,\,|\gamma|\leqslant m).
\end{equation*}
Define $g_{\alpha}\in\mathbb{R}[\xi_1,\xi_2,\dots,\xi_s]$ by
\begin{equation*}
	g_{\alpha}=\prod_{i=1}^s\prod_{\ell=0}^{\alpha_i-1}\frac{\xi_i-z_{i\ell}}{z_{i\alpha_i}-z_{i\ell}},
\end{equation*}
and let
\begin{equation*}
	f_{\alpha}=g_{\alpha}-\!\!\sum_{\substack{\beta\in\mathbb{N}^s \\ |\alpha|<|\beta|\leqslant m}} \!\!\! g_{\alpha}(z_{\beta})f_{\beta}.
\end{equation*}
Then $\deg f_{\alpha}\leqslant m$, and it is easy to see that
\begin{equation}\label{interpolants}
	f_{\alpha}(z_{\gamma})=\delta_{\alpha\gamma} \quad (\gamma\in\mathbb{N}^s,\,|\gamma|\leqslant m).
\end{equation}
Thus, by induction we obtain polynomials $f_{\alpha}$ with $\deg f_{\alpha}\leqslant m$ satisfying \eqref{interpolants} for all $\alpha\in\mathbb{N}^s$ with $|\alpha|\leqslant m$.
It is clear that the subspace
\begin{equation*}
	\sum_{\substack{\alpha\in\mathbb{N}^s \\ |\alpha|\leqslant m}} \mathbb{R}f_{\alpha} \subset \mathbb{R}[\xi_1,\xi_2,\dots,\xi_s]
\end{equation*}
is an interpolation space with respect to $\Sigma$, and the proof is complete.
\end{proof}

We end this section by recalling a construction of a minimal degree interpolation space $\mathscr{M}(S)$ due to de Boor and Ron \cite{BR1990CA,BR1992MZ}.
See also \cite[\S 3]{GS2000ACM}.
For every non-zero element $f=\sum_{i=0}^{\infty}f_i$ in the ring of formal power series $\mathbb{R}[[\xi_1,\xi_2,\dots,\xi_s]]$ where $f_i$ is homogeneous of degree $i$, let
\begin{equation*}
	f_{\downarrow}=f_{i_0},
\end{equation*}
where $i_0=\min\{i:f_i\ne 0\}$.
We conventionally set $0_{\downarrow}:=0$.

\begin{thm}[\cite{BR1990CA,BR1992MZ}]\label{existence of minimal one}
Let $S$ be a finite set of points in $\mathbb{R}^s$.
Let $\mathscr{E}$ be the subspace of $\mathbb{R}[[\xi_1,\xi_2,\dots,\xi_s]]$ spanned by the exponential functions
\begin{equation*}
	\exp\left(\sum_{i=1}^s z_i\xi_i \right) \quad ((z_1,z_2,\dots,z_s)\in S).
\end{equation*}
Then the subspace
\begin{equation*}
	\sum_{f\in\mathscr{E}}\mathbb{R}f_{\downarrow}\subset\mathbb{R}[\xi_1,\xi_2,\dots,\xi_s]
\end{equation*}
is a minimal degree interpolation space with respect to $S$.
\end{thm}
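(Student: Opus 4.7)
The plan is to work with the standard duality pairing
\begin{equation*}
\langle p, f\rangle := (p(\partial)f)(0)
\end{equation*}
between $\mathbb{R}[\xi_1,\dots,\xi_s]$ and $\mathbb{R}[[\xi_1,\dots,\xi_s]]$, in which $p(\partial)$ is obtained from $p$ by replacing each $\xi_i$ with $\partial/\partial\xi_i$. Two features of this pairing drive the whole argument: first, $\langle p,\exp(\sum_i z_i\xi_i)\rangle = p(z)$, so evaluation at $S$ corresponds to pairing against $\mathscr{E}$; second, if $p=\sum_i p_i$ and $f=\sum_i f_i$ are the decompositions into homogeneous parts, then $\langle p,f\rangle = \sum_i\langle p_i,f_i\rangle$ since $p_i(\partial)$ lowers degree by exactly $i$.

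First I would observe that $\mathscr{M}$ is a graded subspace of $\mathbb{R}[\xi_1,\dots,\xi_s]$ (being spanned by the homogeneous elements $f_\downarrow$) and compute its dimension. Setting $\mathscr{E}_{\geqslant d} := \{f\in\mathscr{E}:\operatorname{ord}(f)\geqslant d\}$, one obtains a descending filtration $\mathscr{E}=\mathscr{E}_{\geqslant 0}\supset\mathscr{E}_{\geqslant 1}\supset\cdots$ with trivial intersection. The linear map $\mathscr{E}_{\geqslant d}\to\mathbb{R}[\xi_1,\dots,\xi_s]$ sending $f$ to its homogeneous part of degree $d$ has kernel $\mathscr{E}_{\geqslant d+1}$ and image equal to the degree-$d$ homogeneous component $\mathscr{M}_d$ of $\mathscr{M}$ (for if $f\in\mathscr{E}_{\geqslant d}$ and $f_d\neq 0$ then $f_\downarrow = f_d$). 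Summing over $d$ yields
\begin{equation*}
\dim\mathscr{M} = \sum_d\bigl(\dim\mathscr{E}_{\geqslant d}-\dim\mathscr{E}_{\geqslant d+1}\bigr) = \dim\mathscr{E}=|S|,
\end{equation*}
the last equality being the linear independence of the characters $e_z := \exp(\sum_i z_i\xi_i)$.

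Next I would restrict the duality pairing to $\Pi_{\leqslant d}\times\mathscr{E}\to\mathbb{R}$, where $\Pi_{\leqslant d}$ denotes the space of polynomials of degree at most $d$. Its right kernel is exactly $\mathscr{E}_{\geqslant d+1}$, since $\sum_z c_z e_z$ annihilates every $\xi^\alpha$ with $|\alpha|\leqslant d$ iff $\sum_z c_z z^\alpha = 0$ for all such $\alpha$; its left kernel is $\Pi_{\leqslant d}\cap I(S)$, where $I(S)$ is the ideal of polynomials vanishing on $S$. The induced nondegenerate pairing gives $\dim\Pi_{\leqslant d}/(\Pi_{\leqslant d}\cap I(S)) = |S|-\dim\mathscr{E}_{\geqslant d+1}$, which agrees with $\dim(\mathscr{M}\cap\Pi_{\leqslant d})=\sum_{i\leqslant d}\dim\mathscr{M}_i$ by the previous step. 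Taking $d$ large enough that $\mathscr{E}_{\geqslant d+1}=0$ shows that the evaluation map $\operatorname{ev}\colon\mathscr{M}\to\mathbb{R}^S$ is surjective, hence (by the dimension count) bijective, establishing the interpolation property. For general $d$, the same identity delivers $\operatorname{ev}(\mathscr{M}\cap\Pi_{\leqslant d}) = \operatorname{ev}(\Pi_{\leqslant d})$, which is precisely the minimal-degree property: the unique $\mathscr{M}$-interpolant of any $f\in\Pi_{\leqslant d}$ already lies in $\mathscr{M}\cap\Pi_{\leqslant d}$.

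The main obstacle is the graded-dimension count in the first step, which requires recognizing that the \emph{non-linear} assignment $f\mapsto f_\downarrow$ nevertheless induces a \emph{linear} identification of each $\mathscr{M}_d$ with the quotient $\mathscr{E}_{\geqslant d}/\mathscr{E}_{\geqslant d+1}$; once this is in place, both the interpolation property and minimality fall out of a single dimension comparison via the differential pairing.
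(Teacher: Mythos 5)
Your framework---the differential pairing $\langle p,f\rangle=(p(\partial)f)(0)$, the filtration $\mathscr{E}_{\geqslant 0}\supset\mathscr{E}_{\geqslant 1}\supset\cdots$, and the identification of each homogeneous component $\mathscr{M}_d$ with $\mathscr{E}_{\geqslant d}/\mathscr{E}_{\geqslant d+1}$---is exactly the ``least solution'' machinery of de Boor and Ron, to whom the paper attributes this theorem without proof, so there is no in-paper argument to compare against. The graded dimension count $\dim\mathscr{M}=|S|$ and the computation of the left and right kernels of the pairing restricted to $\Pi_{\leqslant d}\times\mathscr{E}$ are all correct as written.

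There is, however, a genuine gap at the decisive step. From $\dim(\mathscr{M}\cap\Pi_{\leqslant d})=\dim\operatorname{ev}(\Pi_{\leqslant d})$ together with the inclusion $\operatorname{ev}(\mathscr{M}\cap\Pi_{\leqslant d})\subseteq\operatorname{ev}(\Pi_{\leqslant d})$ you cannot conclude $\operatorname{ev}(\mathscr{M}\cap\Pi_{\leqslant d})=\operatorname{ev}(\Pi_{\leqslant d})$, nor, for large $d$, that $\operatorname{ev}\colon\mathscr{M}\to\mathbb{R}^S$ is surjective: a subspace of $\Pi_{\leqslant d}$ of dimension $|S|$ may meet $I(S)$ nontrivially, in which case its image under $\operatorname{ev}$ is a proper subspace of $\operatorname{ev}(\Pi_{\leqslant d})$. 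What is missing is precisely $\mathscr{M}\cap I(S)=0$ (equivalently, nondegeneracy of the pairing between $\mathscr{M}$ and $\mathscr{E}$), and this does not follow from the two features of the pairing you list; it requires a third, which you never invoke: on each homogeneous component the pairing is positive definite, since $\langle\xi^{\alpha},\xi^{\beta}\rangle=\alpha!\,\delta_{\alpha\beta}$. Granting that, the fix is short: if $0\neq p=\sum_i p_i\in\mathscr{M}$ with $p_i\in\mathscr{M}_i$ vanished on $S$, let $d$ be the largest index with $p_d\neq 0$; pairing $p$ against any $f\in\mathscr{E}_{\geqslant d}$ leaves only the term $\langle p_d,f_d\rangle$, and since $\mathscr{M}_d=\{f_d:f\in\mathscr{E}_{\geqslant d}\}$ contains $p_d$, this forces $\langle p_d,p_d\rangle=0$, hence $p_d=0$, a contradiction. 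With this lemma inserted, $\operatorname{ev}$ is injective on $\mathscr{M}\cap\Pi_{\leqslant d}$ for every $d$, and your dimension identities then do deliver both the interpolation property and minimality. As it stands, though, the argument asserts the crux of the theorem rather than proving it.
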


\noindent
Theorem \ref{existence of minimal one} immediately leads to the following formula for $\mu(S)$ which is well suited for computer calculations:

\begin{supp}
For every finite set $S$ of points in $\mathbb{R}^s$, the scalar $\mu(S)$ equals the smallest $m\in\mathbb{N}$ for which the polynomials
\begin{equation*}
	\sum_{k=0}^m \left(\sum_{i=1}^s z_i\xi_i \right)^{\!\!k} \quad ((z_1,z_2,\dots,z_s)\in S)
\end{equation*}
are linearly independent.
\end{supp}

\noindent
(Note that we just discarded the irrelevant factors $1/(k!)$ in the Taylor polynomials of these exponential functions.)

\section{Proofs}\label{sec: proofs}

We begin by proving a few preliminary lemmas.
Recall the space $U_0$ spanned by the irreducible $\Th$-modules in $V^{\otimes n}$ with displacement $0$.
We let $\pi_{U_0}:V^{\otimes n}\rightarrow U_0$ denote the orthogonal projection onto $U_0$.
Note that $\pi_{U_0}$ is a $\Th$-homomorphism.

\begin{lem}\label{orthogonal}
The primary $\Te$-module $\Me\hat{\bm{x}}_0$ is orthogonal to every non-primary irreducible $\Th$-module in $U_0$.
\end{lem}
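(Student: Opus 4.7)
The plan is to reduce the orthogonality to a subspace containment and verify the latter by a direct projection computation. For any $v\in\Me\hat{\bm{x}}_0$ and any $w$ in a non-primary irreducible $\Th$-submodule of $U_0$, since $w\in U_0$ we have $\langle v,w\rangle=\langle\pi_{U_0}(v),w\rangle$. Hence the lemma will follow from
\begin{equation*}
	\pi_{U_0}(\Me\hat{\bm{x}}_0)\subseteq \Mh\hat{\bm{x}}_0,
\end{equation*}
combined with the fact that the primary $\Th$-module $\Mh\hat{\bm{x}}_0=\dMh\hat{X}^n$ is orthogonal to every non-primary irreducible $\Th$-module. The latter holds because the rank-$1$ projection $\bm{E}_0^*(\bm{x}_0)$ (for the Hamming scheme) maps $V^{\otimes n}$ onto $\mathbb{C}\hat{\bm{x}}_0$, forcing the primary $\Th$-module to appear with multiplicity one in $V^{\otimes n}$; every non-primary irreducible $\Th$-module is then non-isomorphic to it, and orthogonal by the semisimplicity of $\Th$.

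To verify the containment, I would appeal to \eqref{structure of U_0} to write $\pi_{U_0}=\pi_V^{\otimes n}$, where $\pi_V$ is the orthogonal projection of $V$ onto $\mathbb{C}\hat{x}_0+\mathbb{C}\hat{X}$. A short inner product computation using the orthogonal basis $\{\hat{x}_0,\hat{X}-\hat{x}_0\}$ yields
\begin{equation*}
	\pi_V(\hat{X}_0)=\hat{x}_0, \qquad \pi_V(\hat{X}_i)=\frac{|X_i|}{|X|-1}(\hat{X}-\hat{x}_0) \quad (1\leqslant i\leqslant s).
\end{equation*}
Applying $\pi_V^{\otimes n}$ to the tensor expression \eqref{tensor expression for adjacency matrices} for $\bm{A}_\alpha\hat{\bm{x}}_0$ (and using $A_i\hat{x}_0=\hat{X}_i$), every slot with $i_k=0$ contributes $\hat{x}_0$, while every slot with $i_k\geqslant 1$ contributes a scalar multiple of $\hat{X}-\hat{x}_0$. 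After collecting constants and summing over arrangements of the multiset $\{0^{n-|\alpha|},1^{\alpha_1},\dots,s^{\alpha_s}\}$, one finds that $\pi_{U_0}(\bm{A}_\alpha\hat{\bm{x}}_0)$ is a scalar multiple of $\bm{A}_{|\alpha|}\hat{\bm{x}}_0\in\Mh\hat{\bm{x}}_0$, where $\bm{A}_{|\alpha|}$ now denotes the Hamming adjacency matrix of degree $|\alpha|$. Since the $\bm{A}_\alpha\hat{\bm{x}}_0$ span $\Me\hat{\bm{x}}_0$, the containment follows.

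The main obstacle is this last step: one must verify that the projection collapses the $s$-component label $\alpha\in\mathbb{N}^s$ into a scalar multiple indexed only by $|\alpha|$. The collapse rests on the observation that $\pi_V(\hat{X}_i)$ is proportional to the single vector $\hat{X}-\hat{x}_0$ for every $i\geqslant 1$, so the $s$-fold distinction between nontrivial slots reduces to a binary one, which is exactly the structure recorded by $\bm{A}_{|\alpha|}\hat{\bm{x}}_0$ inside the Hamming primary module.
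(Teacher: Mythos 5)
Your proposal is correct and takes essentially the same route as the paper: both reduce the lemma to showing $\pi_{U_0}\Me\hat{\bm{x}}_0=\Mh\hat{\bm{x}}_0$ and both verify that $\pi_{U_0}\bm{A}_{\alpha}\hat{\bm{x}}_0$ is a scalar multiple of $\bm{A}_{|\alpha|}\hat{\bm{x}}_0$ via the same collapse of the $s$-fold slot labels to a binary pattern (the paper computes $\langle\bm{A}_{\alpha}\hat{\bm{x}}_0,\bm{u}_{\tau}\rangle$ against the basis $\bm{u}_{\tau}$ rather than applying $\pi_V^{\otimes n}$ slot by slot, but this is the same calculation). The only nitpick is that in general $A_i\hat{x}_0=\hat{X}_{i'}$ with $A_{i'}=A_i^{\dagger}$ rather than $\hat{X}_i$, which is harmless here since $|X_{i'}|=|X_i|=P_{0i}$.
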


\begin{proof}
Let $u_0=\hat{x}_0$ and $u_1=\hat{X}-\hat{x}_0$.
For every $\tau=(\tau_1,\tau_2,\dots,\tau_n)\in\{0,1\}^n$, let
\begin{equation*}
	\bm{u}_{\tau}=u_{\tau_1}\otimes u_{\tau_2}\otimes \dots\otimes u_{\tau_n} \in \bm{E}_{|\tau|}^*U_0,
\end{equation*}
where $|\tau|=\sum_{\ell=1}^n \tau_{\ell}$ denotes the weight of $\tau$.
The $\bm{u}_{\tau}$ form an orthogonal basis of $U_0$ by \eqref{structure of U_0}, and we have
\begin{equation}\label{norm of u}
	||\bm{u}_{\tau}||^2=(|X|-1)^{|\tau|}.
\end{equation}
For every $\alpha\in\mathbb{N}^s$ with $|\alpha|\leqslant n$, using \eqref{tensor expression for adjacency matrices} we routinely have
\begin{equation}\label{inner product}
	\langle \bm{A}_{\alpha}\hat{\bm{x}}_0,\bm{u}_{\tau}\rangle=\delta_{|\tau|,|\alpha|} \binom{|\alpha|}{\alpha_1,\alpha_2,\dots,\alpha_s} \prod_{i=1}^s (P_{0i})^{\alpha_i},
\end{equation}
where we recall that $P_{0i}$ is the degree of $(X,R_i)$.
Since
\begin{equation*}
	\pi_{U_0}\bm{A}_{\alpha}\hat{\bm{x}}_0=\sum_{\tau\in\{0,1\}^n} \!\! ||\bm{u}_{\tau}||^{-2} \langle \bm{A}_{\alpha}\hat{\bm{x}}_0,\bm{u}_{\tau}\rangle \,\bm{u}_{\tau},
\end{equation*}
it follows from \eqref{norm of u} and \eqref{inner product} that $\pi_{U_0}\bm{A}_{\alpha}\hat{\bm{x}}_0$ is a scalar multiple of
\begin{equation*}
	\sum_{\substack{\tau\in\{0,1\}^n \\ |\tau|=|\alpha|}}\!\!\bm{u}_{\tau}=\bm{A}_{|\alpha|}\hat{\bm{x}}_0\in \Mh\hat{\bm{x}}_0.
\end{equation*}
It follows that $\pi_{U_0}\Me\hat{\bm{x}}_0=\Mh\hat{\bm{x}}_0$, as desired.
\end{proof}

\begin{lem}\label{characterization of designs}
Let $C$ be a non-empty subset of $(X^n)_k$ for some $0\leqslant k\leqslant n$.
Then the following are equivalent:
\begin{enumerate}[(i)]
\item The multiset $\{\operatorname{supp}(\bm{x}):\bm{x}\in C\}$ is a $t$-design.
\item $\bm{E}_i\pi_{U_0}\hat{C}$ is a scalar multiple of $\bm{E}_i\hat{\bm{x}}_0$ for every $0\leqslant i\leqslant t$.
\item $\hat{C}$ is orthogonal to every non-primary irreducible $\Th$-module in $U_0$ with endpoint at most $t$.
\end{enumerate}
\end{lem}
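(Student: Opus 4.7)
The plan is to establish the two equivalences \textup{(i)}$\Leftrightarrow$\textup{(ii)} and \textup{(ii)}$\Leftrightarrow$\textup{(iii)} separately, after first making $\pi_{U_0}\hat{C}$ explicit. For \textup{(ii)}$\Leftrightarrow$\textup{(iii)} I use the thin-module structure of $U_0$ under $\Th$; for \textup{(i)}$\Leftrightarrow$\textup{(ii)} I pass to a joint $\Mh$-eigenbasis of $U_0$ and do a Fourier/Möbius calculation. First I would compute $\pi_{U_0}\hat{C}$ coordinate by coordinate, exactly as in the proof of Lemma \ref{orthogonal}: the one-site projection onto $\mathbb{C}u_0+\mathbb{C}u_1$ sends $\hat{x}$ to $u_0$ if $x=x_0$ and to $(|X|-1)^{-1}u_1$ otherwise. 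Hence for $\bm{x}\in(X^n)_k$, $\pi_{U_0}\hat{\bm{x}}=(|X|-1)^{-k}\bm{u}_{\tau(\operatorname{supp}(\bm{x}))}$, where $\tau(T)\in\{0,1\}^n$ is the characteristic vector of $T$, and summing over $\bm{x}\in C$ gives
\begin{equation*}
\pi_{U_0}\hat{C}=(|X|-1)^{-k}\!\!\sum_{T\in\binom{\{1,\dots,n\}}{k}}\!\!m_T\,\bm{u}_{\tau(T)},
\end{equation*}
with $m_T=|\{\bm{x}\in C:\operatorname{supp}(\bm{x})=T\}|$ the multiplicity of $T$ in the multiset \eqref{these are t-designs}.

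For \textup{(ii)}$\Leftrightarrow$\textup{(iii)}, fix a non-primary irreducible $\Th$-submodule $W\subset U_0$ and let $P_W:V^{\otimes n}\to W$ be the orthogonal projection onto $W$. As a $\Th$-homomorphism, $P_W$ commutes with $\bm{E}_i$ and $\bm{E}_k^*$, so $\hat{C}\in\bm{E}_k^* V^{\otimes n}$ forces $P_W\hat{C}\in\bm{E}_k^* W$. Since $W\subset U_0$ has displacement zero, Lemma \ref{properties of irreducible modules}(iii) gives endpoint $r$ and diameter $n-2r$, so $\bm{E}_k^* W$ is at most $1$-dimensional; when nonzero, write $P_W\hat{C}=c_W\,w_k^*$ for a generator $w_k^*$ of $\bm{E}_k^* W$. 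Condition \textup{(iii)} is then precisely $c_W=0$ for all such $W$ with $r\le t$. Condition \textup{(ii)} (which, after passing $P_W$ through $\bm{E}_i$, reads $c_W\,\bm{E}_i w_k^*=0$ for all $0\le i\le t$ and all non-primary $W\subset U_0$) reduces to the same condition once we know $\bm{E}_i w_k^*\ne 0$ for $r\le i\le n-r$: choosing $i=r$ already forces $c_W=0$ whenever $r\le t$. The non-vanishing is the standard thin-module fact that the primal-to-dual change-of-basis matrix has no zero entries: if $w_k^*=\sum_j c_{jk}w'_j$ had some $c_{jk}=0$ (with $w'_j$ a basis of $\bm{E}_j W$), then $\mathrm{span}\{w'_j:c_{jk}\ne 0\}$ would be a proper subspace of $W$ containing $w_k^*$; by Lemma \ref{properties of irreducible modules}(v) (the tridiagonal action of $\bm{A}_1^*$ on $\{w'_j\}$ has no vanishing off-diagonals), this subspace cannot be $\Th$-invariant, contradicting the irreducibility of $W=\Th w_k^*$.

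For \textup{(i)}$\Leftrightarrow$\textup{(ii)}, diagonalize $J-I$ on $\mathbb{C}u_0+\mathbb{C}u_1$: the eigenvectors are $v_+:=u_0+u_1=\hat{X}$ (eigenvalue $|X|-1$) and $v_-:=(|X|-1)u_0-u_1=|X|\hat{x}_0-\hat{X}$ (eigenvalue $-1$). The tensor products $v_\epsilon:=v_{\epsilon_1}\otimes\cdots\otimes v_{\epsilon_n}$ ($\epsilon\in\{+,-\}^n$) form a joint eigenbasis of $\Mh$ on $U_0$, with $v_\epsilon\in\bm{E}_{|\epsilon|_-}U_0$ where $|\epsilon|_-=|\{\ell:\epsilon_\ell=-\}|$, and $\bm{E}_i\hat{\bm{x}}_0$ is proportional to $\sum_{|\epsilon|_-=i}v_\epsilon$. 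Substituting $u_0=(v_++v_-)/|X|$ and $u_1=((|X|-1)v_+-v_-)/|X|$ into the formula for $\pi_{U_0}\hat{C}$ and expanding, the coefficient of $v_\epsilon$ is a nonzero scalar multiple of
\begin{equation*}
\sum_{T\in\binom{\{1,\dots,n\}}{k}} m_T\Bigl(-\tfrac{1}{|X|-1}\Bigr)^{\!|T\cap N_\epsilon|}, \quad N_\epsilon:=\{\ell:\epsilon_\ell=-\}.
\end{equation*}
Condition \textup{(ii)} demands this depend only on $|N_\epsilon|$ whenever $|N_\epsilon|\le t$. Writing $(-1/(|X|-1))^{|T\cap N|}=\sum_{S\subset T\cap N}(-|X|/(|X|-1))^{|S|}$ and interchanging sums presents the coefficient as $\sum_{S\subset N_\epsilon}(-|X|/(|X|-1))^{|S|}\lambda_S$ with $\lambda_S:=|\{T\in\mathcal{B}:T\supset S\}|$, and Möbius inversion on the Boolean lattice yields that this depends only on $|N_\epsilon|$ (for $|N_\epsilon|\le t$) if and only if $\lambda_S$ depends only on $|S|$ (for $|S|\le t$), which is exactly \textup{(i)}. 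The main obstacle I anticipate is the thin-module non-vanishing step in the middle paragraph, as this crucially leans on the Hamming-scheme structure of $H(n,|X|)$ (thinness, irreducibility, and the pairwise-distinct eigenvalues $\theta_j=n(|X|-1)-|X|j$); by contrast, the computation in the last paragraph is routine bookkeeping, though one has to keep careful track of the $|X|$-dependent change-of-basis coefficients.
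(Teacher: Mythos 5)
Your proposal is essentially correct and follows the same overall architecture as the paper's proof: the same split into (i)$\Leftrightarrow$(ii) and (i\,i)$\Leftrightarrow$(iii), the same $E_0$-eigenbasis of $U_0$ (your $v_+,v_-$ are $|X|v_0,|X|v_1$ in the paper's notation, so your $v_\epsilon$ are the paper's $\bm{v}_\tau$ up to scalars), and the same reliance on the thin tridiagonal structure from Lemma \ref{properties of irreducible modules}. Two genuine variations in execution. For (ii)$\Rightarrow$(i), the paper avoids M\"{o}bius inversion by introducing the auxiliary vectors $\bm{w}_\tau$ with $w_0=\hat{X}$, $w_1=\hat{x}_0$, so that $\langle\hat{C},\bm{w}_\tau\rangle$ counts codewords whose support avoids $\operatorname{supp}(\tau)$, and then invokes the standard avoidance characterization of $t$-designs; your inversion on the Boolean lattice is self-contained and equally valid. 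For (ii)$\Rightarrow$(iii), the paper argues with $\ell=\min\{i:\bm{E}_i\pi_W\hat{C}\ne 0\}$ and the operator $(\bm{A}_1^*)^{\ell-r}$, exploiting that $\hat{C}\in\bm{E}_k^*V^{\otimes n}$ is a $\theta_k^*$-eigenvector of $\bm{A}_1^*$; you argue instead about the components of the generator $w_k^*$ of $\bm{E}_k^*W$. These are two views of the same fact.

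One step of yours needs repair. The justification of the non-vanishing claim --- ``$\operatorname{span}\{w_j':c_{jk}\ne 0\}$ would be a proper subspace containing $w_k^*$; it cannot be $\Th$-invariant, contradicting irreducibility'' --- is not a valid deduction: a proper subspace of an irreducible module is never invariant, and its failure to be invariant contradicts nothing. Moreover the blanket claim $\bm{E}_iw_k^*\ne 0$ for all $r\leqslant i\leqslant n-r$ is more than you need and is not delivered by this kind of argument (interior components of an eigenvector of a tridiagonal matrix may vanish when the eigenvalue collides with a diagonal entry). What you actually use, and what is true, is only the extreme case $i=r$: since $w_k^*$ spans $\bm{E}_k^*W$, it is an eigenvector of $\bm{A}_1^*|_W$ with eigenvalue $\theta_k^*$, and by Lemma \ref{properties of irreducible modules}(ii),(v) this operator is represented in the basis $\{w_j'\}_{j=r}^{n-r}$ by a tridiagonal matrix with nonzero off-diagonal entries; if the $\bm{E}_rW$-component of $w_k^*$ vanished, the three-term recurrence starting from the first row would force every component to vanish. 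Hence $\bm{E}_rw_k^*\ne 0$, your choice $i=r$ forces $c_W=0$, and the rest of your argument goes through.
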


\begin{proof}
First, we show (i)\,$\Leftrightarrow$\,(ii).
To this end, we introduce another orthogonal basis of $U_0$ as follows.
Define $v_0,v_1\in V$ by
\begin{equation}\label{v_0, v_1}
	v_0=E_0\hat{x}_0=|X|^{-1}\hat{X}, \quad v_1=(I-E_0)\hat{x}_0=\hat{x}_0-|X|^{-1}\hat{X}.
\end{equation}
Note that
\begin{equation*}
	||v_0||^2=|X|^{-1}, \quad ||v_1||^2=1-|X|^{-1}, \quad \langle v_0,v_1\rangle=0.
\end{equation*}
For every $\tau=(\tau_1,\tau_2,\dots,\tau_n)\in\{0,1\}^n$, let
\begin{equation*}
	\bm{v}_{\tau}=v_{\tau_1}\otimes v_{\tau_2}\otimes \dots\otimes v_{\tau_n} \in \bm{E}_{|\tau|}U_0,
\end{equation*}
where $|\tau|=\sum_{\ell=1}^n \tau_{\ell}$.
The $\bm{v}_{\tau}$ form an orthogonal basis of $U_0$ by \eqref{structure of U_0}, and we have
\begin{equation*}
	||\bm{v}_{\tau}||^2=|X|^{-n}\,(|X|-1)^{|\tau|}.
\end{equation*}
Moreover, observe that
\begin{equation*}
	\sum_{\substack{\tau\in \{0,1\}^n \\ |\tau|=i}} \!\!\bm{v}_{\tau}=\bm{E}_i\hat{\bm{x}}_0.
\end{equation*}
By these comments and since
\begin{equation*}
	\bm{E}_i\pi_{U_0}\hat{C}=\sum_{\substack{\tau\in \{0,1\}^n \\ |\tau|=i}} \!\! ||\bm{v}_{\tau}||^{-2} \langle \hat{C},\bm{v}_{\tau} \rangle \,\bm{v}_{\tau},
\end{equation*}
it follows that (ii) holds if and only if $\langle \hat{C},\bm{v}_{\tau} \rangle$ depends only on $|\tau|$ whenever $|\tau|\leqslant t$.

Assume that (i) holds.
Let $\tau\in\{0,1\}^n$ with $|\tau|\leqslant t$.
From \eqref{v_0, v_1} it follows that
\begin{align*}
	\langle \hat{C},\bm{v}_{\tau} \rangle=|X|^{-n}\sum_{i=0}^{|\tau|}&(-1)^{i}(|X|-1)^{|\tau|-i} \\
	& \times \bigl|\bigl\{\bm{x}\in C:|\operatorname{supp}(\tau)\cap\operatorname{supp}(\bm{x})|=i\bigr\}\bigr|,
\end{align*}
which is indeed a constant depending only on $|\tau|$, and hence (ii) holds.

Conversely, assume that (ii) holds.
Let $\tau\in\{0,1\}^n$ with $|\tau|=t$, and let
\begin{equation*}
	\bm{w}_{\tau}=w_{\tau_1}\otimes w_{\tau_2}\otimes \dots\otimes w_{\tau_n},
\end{equation*}
where $w_0=\hat{X}=|X|v_0$ and $w_1=\hat{x}_0=v_0+v_1$.
On the one hand, we have
\begin{equation}\label{complement is t-design}
	\langle \hat{C},\bm{w}_{\tau} \rangle = |\{\bm{x}\in C: \operatorname{supp}(\tau)\cap\operatorname{supp}(\bm{x})=\emptyset \}|.
\end{equation}
On the other hand, observe that
\begin{equation*}
	\bm{w}_{\tau}=|X|^{n-t}\sum_{\rho}\bm{v}_{\rho},
\end{equation*}
where the sum is over $\rho\in\{0,1\}^n$ with $\operatorname{supp}(\rho)\subset\operatorname{supp}(\tau)$.
It follows that the common value in \eqref{complement is t-design} is independent of the choice of $\tau$, and hence (i) holds.

Next, we show (ii)\,$\Leftrightarrow$\,(iii).
Observe that
\begin{equation}\label{E_iU_0}
	\bm{E}_iU_0=\mathbb{C}\bm{E}_i\hat{\bm{x}}_0 \perp \sum_{W} \bm{E}_iW \quad (0\leqslant i\leqslant t),
\end{equation}
where the sum is over the non-primary irreducible $\Th$-modules $W$ in $U_0$ with endpoint at most $i$.
If (iii) holds, then the vectors $\bm{E}_i\hat{C}$ $(0\leqslant i\leqslant t)$ are also orthogonal to every non-primary irreducible $\Th$-module in $U_0$ with endpoint at most $t$, and hence the vector $\pi_{U_0}\bm{E}_i\hat{C}\in \bm{E}_iU_0$ vanishes on the second term of the RHS in \eqref{E_iU_0} for every $0\leqslant i\leqslant t$; in other words, (ii) holds.

Conversely, let $W$ be a non-primary irreducible $\Th$-module in $U_0$ with endpoint $r\leqslant t$, and assume that $\hat{C}$ is not orthogonal to $W$.
Let $\pi_W:V^{\otimes n}\rightarrow W$ be the orthogonal projection onto $W$.
Then we have $\pi_W\hat{C}\ne 0$.
Let
\begin{equation*}
	\ell=\min\{i:\bm{E}_i\pi_W\hat{C}\ne 0\}.
\end{equation*}
By Lemma \ref{properties of irreducible modules}\,(iii), $\bm{E}_{\ell}\pi_W\hat{C}$ spans $\bm{E}_{\ell}W$.
In view of Lemma \ref{properties of irreducible modules}\,(ii),\,(v), we have
\begin{equation*}
	\bm{E}_r(\bm{A}_1^*)^{\ell-r}\pi_W\hat{C}=\bm{E}_r(\bm{A}_1^*)^{\ell-r}\bm{E}_{\ell}\pi_W\hat{C}\ne 0.
\end{equation*}
Since $\pi_W$ is a $\Th$-homomorphism and since $\hat{C}\in\bm{E}_k^*V^{\otimes n}$, it follows from \eqref{two generators} that
\begin{equation*}
	0\ne \bm{E}_r\pi_W(\bm{A}_1^*)^{\ell-r}\hat{C}=(\theta_k^*)^{\ell-r}\bm{E}_r\pi_W\hat{C},
\end{equation*}
i.e., we must have $\ell=r$.
It follows that $\bm{E}_i\pi_{U_0}\hat{C}$ does not vanish on the second term of the RHS in \eqref{E_iU_0} when $i=r$, and hence (ii) fails to hold.
We have now shown (ii)\,$\Leftrightarrow$\,(iii), and the proof is complete.
\end{proof}

\begin{lem}\label{implication of weakly balanced designs}
Let $C$ be a non-empty subset of $(X^n)_{\alpha}$ for some $\alpha\in\mathbb{N}^s$ with $|\alpha|\leqslant n$.
Suppose that $C$ is a weakly $t$-balanced array over $(X,\mathcal{R})$.
Then
\begin{equation*}
	\bm{E}_i^*\pi_{U_0}\Me\hat{C}=\mathbb{C}\bm{A}_i\hat{\bm{x}}_0 \quad (0\leqslant i\leqslant t).
\end{equation*}
\end{lem}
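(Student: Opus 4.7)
The plan is to compute in the orthogonal basis $\{\bm{u}_\tau:\tau\in\{0,1\}^n\}$ of $U_0$ introduced in the proof of Lemma~\ref{orthogonal}, where $u_0=\hat{x}_0$, $u_1=\hat{X}-\hat{x}_0$, and $\bm{u}_\tau=u_{\tau_1}\otimes\cdots\otimes u_{\tau_n}\in\bm{E}_{|\tau|}^*U_0$. Since $\Me$ is spanned by the $\bm{A}_\beta$, and since $\|\bm{u}_\tau\|^2=(|X|-1)^{|\tau|}$ and $\sum_{|\tau|=i}\bm{u}_\tau=\bm{A}_i\hat{\bm{x}}_0$ depend only on $|\tau|$, the inclusion $\bm{E}_i^*\pi_{U_0}\Me\hat{C}\subset\mathbb{C}\bm{A}_i\hat{\bm{x}}_0$ reduces, via the orthogonal expansion $\bm{E}_i^*\pi_{U_0}v=\sum_{|\tau|=i}\|\bm{u}_\tau\|^{-2}\langle v,\bm{u}_\tau\rangle\bm{u}_\tau$, to showing that $\langle\bm{A}_\beta\hat{C},\bm{u}_\tau\rangle$ depends on $\tau$ only through $|\tau|$ whenever $|\tau|\leqslant t$.

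To do this, I would note that $\langle\hat{\bm{z}},\bm{u}_\tau\rangle$ is the indicator of $\operatorname{supp}(\bm{z})=\operatorname{supp}(\tau)$, which gives
\begin{equation*}
\langle\bm{A}_\beta\hat{C},\bm{u}_\tau\rangle=\sum_{\bm{y}\in C}\bigl|\bigl\{\bm{z}\in X^n:\operatorname{supp}(\bm{z})=\Lambda,\,c(\bm{z},\bm{y})=\beta\bigr\}\bigr|,
\end{equation*}
with $\Lambda:=\operatorname{supp}(\tau)$. For fixed $\bm{y}\in C$ I would split the inner count by positions: at $\ell\notin\Lambda$ the forced choice $z_\ell=x_0$ contributes to $c(\bm{z},\bm{y})$ the deterministic composition of $(y_\ell)_{\ell\notin\Lambda}$ with respect to $x_0$, namely $\alpha-\gamma_\Lambda(\bm{y})$, where $\gamma_\Lambda(\bm{y})\in\mathbb{N}^s$ denotes the composition of $(y_\ell)_{\ell\in\Lambda}$; at $\ell\in\Lambda$ the number of admissible $z\in X\setminus\{x_0\}$ with $(z,y_\ell)\in R_{i_\ell}$ depends on $y_\ell$ only through its class. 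Hence the inner count, being a sum over sequences $(i_\ell)_{\ell\in\Lambda}$ of the appropriate composition, is a symmetric function of $\gamma_\Lambda(\bm{y})$ (and of the fixed data $\beta,\alpha,i$). Grouping $\bm{y}\in C$ by the value of $\gamma_\Lambda(\bm{y})$ and invoking the weakly $t$-balanced hypothesis---applicable because $|\Lambda|=i\leqslant t$ and automatically $|\gamma_\Lambda(\bm{y})|\leqslant|\Lambda|$---I conclude that $\langle\bm{A}_\beta\hat{C},\bm{u}_\tau\rangle$ depends only on $i$ and $\beta$. Calling this common value $G(i,\beta)$, I arrive at $\bm{E}_i^*\pi_{U_0}\bm{A}_\beta\hat{C}=(|X|-1)^{-i}G(i,\beta)\bm{A}_i\hat{\bm{x}}_0$, establishing the first inclusion.

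For the reverse inclusion I would exhibit $\beta$ with $G(i,\beta)\ne 0$ by summing over $\beta$. Using $\sum_\beta\bm{A}_\beta=J^{\otimes n}$, $J^{\otimes n}\hat{C}=|C|\hat{X}^{\otimes n}$, and the values $\langle\hat{X},u_0\rangle=1$, $\langle\hat{X},u_1\rangle=|X|-1$, one obtains
\begin{equation*}
\sum_\beta G(i,\beta)=\langle J^{\otimes n}\hat{C},\bm{u}_\tau\rangle=|C|(|X|-1)^i>0
\end{equation*}
for any $\tau$ with $|\tau|=i$. Since each $G(i,\beta)$ is a non-negative integer, at least one is positive, producing the required $\bm{A}_\beta\in\Me$.

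The main technical hurdle lies in the second paragraph, where one must carefully pin down that the count $|\{\bm{z}\in X^n:\operatorname{supp}(\bm{z})=\Lambda,\,c(\bm{z},\bm{y})=\beta\}|$ depends on $\bm{y}$ only through $\gamma_\Lambda(\bm{y})$; once this symmetry is isolated, the weakly $t$-balanced hypothesis yields constancy over all $\tau$ with $|\tau|=i$, and the sum-over-$\beta$ argument for non-vanishing is immediate.
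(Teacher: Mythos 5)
Your proposal is correct and follows essentially the same route as the paper: both work in the orthogonal basis $\bm{u}_{\tau}$ of $U_0$, reduce the claim to showing that $\langle\bm{A}_{\beta}\hat{C},\bm{u}_{\tau}\rangle$ depends only on $|\tau|$ by splitting coordinates into $\operatorname{supp}(\tau)$ and its complement and invoking the weakly $t$-balanced hypothesis, and obtain the reverse inclusion from $J^{\otimes n}\in\Me$. The only cosmetic difference is that the paper packages your position-by-position count into the structure constants $g_{\nu\rho}$ of the length-$|\tau|$ extension scheme (via $\bm{A}_{\nu}'\bm{A}_{|\tau|}'=\sum_{\rho}g_{\nu\rho}\bm{A}_{\rho}'$), which automatically records that your inner count depends on $\bm{y}$ only through $\gamma_{\Lambda}(\bm{y})$.
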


\begin{proof}
We fix $\beta\in\mathbb{N}^s$ with $|\beta|\leqslant n$, and consider the vector $\bm{A}_{\beta}\hat{C}\in\Me\hat{C}$.
We use the notation in the proof of Lemma \ref{orthogonal}.
Let $\tau\in\{0,1\}^n$ with $|\tau|\leqslant t$.
We will use $'$ and $''$ to denote objects associated with the extensions of $(X,\mathcal{R})$ of lengths $|\tau|$ and $n-|\tau|$, respectively; e.g., $\bm{A}_{\gamma}'$ ($\gamma\in\mathbb{N}^s$, $|\gamma|\leqslant |\tau|$), $\bm{A}_i'$ ($0\leqslant i\leqslant |\tau|$), $\bm{x}_0'\in X^{|\tau|}$ for the former.
We understand that the coordinates of $X^{|\tau|}$ and $X^{n-|\tau|}$ are indexed by $\operatorname{supp}(\tau)$ and $\{1,2,\dots,n\}\backslash\operatorname{supp}(\tau)$, respectively.
With this notation established, we have
\begin{equation*}
	\bm{A}_{\beta}=\sum_{\nu}\bm{A}_{\nu}'\otimes \bm{A}_{\beta-\nu}'',
\end{equation*}
where the sum is over $\nu\in\mathbb{N}^s$ such that $\beta-\nu\in\mathbb{N}^s$, $|\nu|\leqslant|\tau|$, and $|\beta-\nu|\leqslant n-|\tau|$.
Observe also that
\begin{equation*}
	\bm{u}_{\tau}=\bm{A}_{|\tau|}'\hat{\bm{x}}_0' \otimes \hat{\bm{x}}_0''.
\end{equation*}
Hence we have
\begin{align}\label{inner product 2}
	\langle \bm{A}_{\beta}\hat{C},\bm{u}_{\tau} \rangle =& \sum_{\nu,\rho} g_{\nu\rho} \cdot \bigl\langle \hat{C}, (\bm{A}_{\rho}')^{\dagger}\hat{\bm{x}}_0' \otimes (\bm{A}_{\beta-\nu}'')^{\dagger} \hat{\bm{x}}_0'' \bigr\rangle \notag \\
	=& \sum_{\nu,\rho} g_{\nu\rho}\cdot \bigl|\bigl\{\bm{x}\in C:(x_{\ell})_{\ell\in\operatorname{supp}(\tau)}\in (X^{|\tau|})_{\rho}\bigr\}\bigr|,
\end{align}
where the sums are over $\nu,\rho\in\mathbb{N}^s$ such that $|\nu|,|\rho|\leqslant |\tau|$, $\beta-\nu=\alpha-\rho\in\mathbb{N}^s$, and $|\beta-\nu|\leqslant n-|\tau|$, and where we write
\begin{equation*}
	\bm{A}_{\nu}'\bm{A}_{|\tau|}'=\bm{A}_{|\tau|}'\bm{A}_{\nu}'=\sum_{\substack{\rho\in\mathbb{N}^s \\ |\rho|\leqslant |\tau|}} g_{\nu\rho}\bm{A}_{\rho}'.
\end{equation*}
By the assumption, the RHS in \eqref{inner product 2} depends only on $|\tau|\leqslant t$, and hence it follows that $\bm{E}_i^*\pi_{U_0}\bm{A}_{\beta}\hat{C}$ is a scalar multiple of $\bm{A}_i\hat{\bm{x}}_0$ for every $0\leqslant i\leqslant t$ as in the proof of Lemma \ref{orthogonal}.
We have now shown that $\bm{E}_i^*\pi_{U_0}\Me\hat{C}$ is a subspace of $\mathbb{C}\bm{A}_i\hat{\bm{x}}_0$ for $0\leqslant i\leqslant t$.
That it is non-zero and hence agrees with $\mathbb{C}\bm{A}_i\hat{\bm{x}}_0$ follows from
\begin{equation*}
	\bm{E}_i^*\pi_{U_0}J^{\otimes n}\hat{C}=|C|\bm{A}_i\hat{\bm{x}}_0.
\end{equation*}
This completes the proof.
\end{proof}

\subsection{Proof of Theorem \ref{general AMT}}

Define $\bm{D}_1^*,\bm{D}_2^*,\dots,\bm{D}_s^*\in\dMe$ by
\begin{equation*}
	\bm{D}_i^*=\sum_{\substack{\alpha\in\mathbb{N}^s \\ |\alpha|\leqslant n}} \alpha_i \bm{E}_{\alpha}^* \quad (1\leqslant i\leqslant s).
\end{equation*}
Observe that the $\bm{D}_i^*$ generate $\dMe$.
By \eqref{P, Q are invertible}, \eqref{0th columns}, and \eqref{standard generators}, for $1\leqslant j\leqslant s$ we have
\begin{align}
	\sum_{i=1}^s P_{ij} \bm{A}_{\bm{e}_i}^* &= \sum_{\substack{\alpha\in\mathbb{N}^s \\ |\alpha|\leqslant n}} \!\left(\sum_{h=0}^s \alpha_h \sum_{i=1}^s Q_{hi}P_{ij}\right) \! \bm{E}_{\alpha}^* \notag \\
	&= \sum_{\substack{\alpha\in\mathbb{N}^s \\ |\alpha|\leqslant n}} \!\left(\sum_{h=0}^s \alpha_h \bigl(|X|\delta_{hj}-Q_{h0}P_{0j}\bigr) \right) \! \bm{E}_{\alpha}^* \notag \\
	&= |X|\bm{D}_j^*-nP_{0j}\, I^{\otimes n}, \label{change of variables is affine linear}
\end{align}
where we have used $\alpha_0=n-|\alpha|$.
In particular, the $\bm{A}_{\bm{e}_i}^*$ also generate $\dMe$.

Now, fix $\alpha\in\mathbb{N}^s$ with $|\alpha|\leqslant n$.
We invoke Lemma \ref{characterization of designs} to show that the multiset \eqref{these are t-designs} is a $t$-design.
Let $W$ be a non-primary irreducible $\Th$-module in $U_0$ with endpoint $r\leqslant t$.
Recall that $W$ has diameter $n-2r$.
It suffices to show that $\bm{E}_{\alpha}^*\hat{C}$ is orthogonal to $W$.
Let $\pi_W:V^{\otimes n}\rightarrow W$ be the orthogonal projection onto $W$.
First, we show that
\begin{equation}\label{claim}
	\pi_W \bm{E}_{\alpha}^*\hat{C}\,\in\!\sum_{i=\delta^*\!-\mu_r}^{n-r}\!\!\bm{E}_iW,
\end{equation}
where $\mu_r:=\mu(S_r)$.
To this end, let $f\in \mathscr{M}(S_r)$ be such that
\begin{equation*}
	f(\beta)=\delta_{\alpha\beta} \quad (\beta\in S_r).
\end{equation*}
Observe that
\begin{equation*}
	f(\bm{D}_1^*,\bm{D}_2^*,\dots,\bm{D}_s^*)-\bm{E}_{\alpha}^*\in \sum_{\beta\notin S_r} \mathbb{R} \bm{E}_{\beta}^*.
\end{equation*}
Since
\begin{equation*}
	W\subset \sum_{i=r}^{n-r}\bm{E}_i^* V^{\otimes n}
\end{equation*}
by Lemma \ref{properties of irreducible modules}\,(iii), we have
\begin{equation}\label{1st room for improvement}
	\pi_W \bm{E}_{\beta}^*\hat{C}=0 \ \ \text{unless} \ \ \beta\in S_r,
\end{equation}
from which it follows that
\begin{equation}\label{interpolation}
	\pi_W\bm{E}_{\alpha}^*\hat{C}=\pi_W f(\bm{D}_1^*,\bm{D}_2^*,\dots,\bm{D}_s^*)\hat{C}.
\end{equation}
Let $U$ be the orthogonal complement of the primary $\Te$-module $\Me \hat{\bm{x}}_0$ in $V^{\otimes n}$, and let $\pi_U:V^{\otimes n}\rightarrow U$ be the orthogonal projection onto $U$.
Note that $\pi_U\bm{E}_0=\bm{E}_0\pi_U=0$ since $\bm{E}_0V^{\otimes n}\subset \Me \hat{\bm{x}}_0$, so that
\begin{equation}\label{dual distance}
	\pi_U\hat{C}\in\sum_{i=\delta^*}^n\bm{E}_iV^{\otimes n}.
\end{equation}
Moreover, since $\pi_U$ is a $\Te$-homomorphism and since $W\subset U$ by Lemma \ref{orthogonal}, we have
\begin{equation}\label{repeated projections}
	\pi_W\bm{B}^*\hat{C}=\pi_W\pi_U\bm{B}^*\hat{C}=\pi_W\bm{B}^*\pi_U\hat{C} \quad (\bm{B}^*\!\in\dMe).
\end{equation}
By the definition of $\mu_r$ and \eqref{change of variables is affine linear}, $f(\bm{D}_1^*,\bm{D}_2^*,\dots,\bm{D}_s^*)$ is written as a polynomial in the $\bm{A}_{\bm{e}_i}^*$ with degree at most $\mu_r$.
For any $\beta,\gamma\in\mathbb{N}^s$ with $|\beta|,|\gamma|\leqslant n$, we also have
\begin{equation*}
	\bm{E}_{\beta}\bm{A}_{\bm{e}_i}^*\bm{E}_{\gamma}=0 \quad \text{if} \ \ \bigl||\beta|-|\gamma|\bigr|>1
\end{equation*}
by virtue of \eqref{triple product relations} and (the dual of) \eqref{recurrence}.
Hence it follows from \eqref{interpolation}, \eqref{dual distance}, and \eqref{repeated projections} that
\begin{equation*}
	\pi_W\bm{E}_{\alpha}^*\hat{C}\in \pi_W\!\sum_{i=\delta^*\!-\mu_r}^n\!\!\bm{E}_iV^{\otimes n}=\sum_{i=\delta^*\!-\mu_r}^{n-r}\!\!\bm{E}_iW.
\end{equation*}
This proves \eqref{claim}.

Assume now that $\bm{E}_{\alpha}^*\hat{C}$ is not orthogonal to $W$, i.e., $\pi_W\bm{E}_{\alpha}^*\hat{C}\ne 0$.
Let 
\begin{equation*}
	\ell=\min\{i:\bm{E}_i\pi_W\bm{E}_{\alpha}^*\hat{C}\ne 0\}.
\end{equation*}
By Lemma \ref{properties of irreducible modules}\,(iii), $\bm{E}_{\ell}\pi_W\bm{E}_{\alpha}^*\hat{C}$ spans $\bm{E}_{\ell}W$.
In view of Lemma \ref{properties of irreducible modules}\,(ii),\,(v), we have
\begin{equation*}
	\bm{E}_r(\bm{A}_1^*)^{\ell-r}\pi_W\bm{E}_{\alpha}^*\hat{C}=\bm{E}_r(\bm{A}_1^*)^{\ell-r}\bm{E}_{\ell}\pi_W\bm{E}_{\alpha}^*\hat{C}\ne 0.
\end{equation*}
Since $\pi_W$ is a $\Th$-homomorphism, it follows from \eqref{two generators} that
\begin{equation*}
	0\ne \bm{E}_r\pi_W(\bm{A}_1^*)^{\ell-r}\bm{E}_{\alpha}^*\hat{C}=(\theta_{|\alpha|}^*)^{\ell-r}\bm{E}_r\pi_W\bm{E}_{\alpha}^*\hat{C},
\end{equation*}
i.e., we must have $\ell=r$.
However, this contradicts \eqref{claim} since $\delta^*-\mu_r>r$.
It follows that $\pi_W\bm{E}_{\alpha}^*\hat{C}=0$, and the proof is complete.

\subsection{Proof of Supplement \ref{1st supp}}

The most important step in the proof of Theorem \ref{general AMT} was to establish \eqref{claim}, and the first key observation \eqref{interpolation} in this process was based on \eqref{1st room for improvement}.
By Lemma \ref{characterization of designs}, \eqref{1st room for improvement} can now be improved as follows:
\begin{equation*}
	\pi_W \bm{E}_{\beta}^*\hat{C}=0 \ \ \text{unless} \ \ \beta\in S_r\backslash K.
\end{equation*}
Hence it suffices to interpolate on $S_r\backslash K$, as desired.

\subsection{Proof of Supplement \ref{2nd supp}}

At the end of the proof of Theorem \ref{general AMT}, we used \eqref{claim} and the assumption $\delta^*-\mu_r>r$ to show that $\pi_W\bm{E}_{\alpha}^*\hat{C}=0$.
Observe that we arrive at the same conclusion if we can instead prove that
\begin{equation}\label{modified claim}
	\pi_W \bm{E}_{\alpha}^*\hat{C}\,\in\!\sum_{i=r+1}^{n-r}\!\!\bm{E}_iW.
\end{equation}
Let $\delta_L^*$ denote the scalar in \eqref{modified dual distance}, and recall that we are assuming that $\delta_L^*-\mu_r>r$.
Then \eqref{dual distance} becomes
\begin{equation*}
	\pi_U\Biggl(\hat{C}-\sum_{\beta\in L}\bm{E}_{\beta}\hat{C}\Biggr)\in\sum_{i=\delta_L^*}^n\bm{E}_iV^{\otimes n},
\end{equation*}
from which it follows in the same manner that
\begin{equation}\label{modified claim 1st half}
	\pi_W \bm{F}_{\alpha}^* \Biggl(\hat{C}-\sum_{\beta\in L}\bm{E}_{\beta}\hat{C}\Biggr)\in \!\sum_{i=\delta_L^*\!-\mu_r}^{n-r}\!\!\!\!\bm{E}_iW\subset \!\sum_{i=r+1}^{n-r}\!\!\bm{E}_iW,
\end{equation}
where we abbreviate
\begin{equation*}
	\bm{F}_{\alpha}^*=f(\bm{D}_1^*,\bm{D}_2^*,\dots,\bm{D}_s^*).
\end{equation*}
On the other hand, recall that the roles of $\Me$ and $\dMe$ are interchanged when we work with the basis $\{\hat{\bm{\varepsilon}}:\bm{\varepsilon}\in X^{*n}\}$ of $V^{\otimes n}$, and observe that $\bm{E}_{\beta}\hat{C}$ is a scalar multiple of the characteristic vector of $(X^{*n})_{\beta}\cap C^{\perp}$ with respect to this basis; cf.~\eqref{how characteristic vectors are related}.
Hence, for any $\beta\in L$ and $0\leqslant i\leqslant t$,
it follows from Lemma \ref{implication of weakly balanced designs} (applied to the dual) that
\begin{align*}
	\bm{E}_i\pi_{W} \bm{F}_{\alpha}^* \bm{E}_{\beta}\hat{C} &= \bm{E}_i\pi_{W}\pi_{U_0} \bm{F}_{\alpha}^* \bm{E}_{\beta}\hat{C} \\
	&= \pi_{W}\bm{E}_i \pi_{U_0} \bm{F}_{\alpha}^* \bm{E}_{\beta}\hat{C} \\
	&\in \mathbb{C} \pi_W\bm{A}_i^*\hat{\bm{\iota}} \\
	&=0,
\end{align*}
where $\bm{\iota}=(\iota,\iota,\dots,\iota)$ is the identity of $X^{*n}$, since $\bm{A}_i^*\hat{\bm{\iota}}=|X|^{n/2}\bm{E}_i\hat{\bm{0}}$ belongs to the primary $\Th$-module $\Mh\hat{\bm{0}}$.
(Recall that $\bm{x}_0=\bm{0}=(0,0,\dots,0)$ in this context.)
Hence we have
\begin{equation}\label{modified claim 2nd half}
	\pi_{W} \bm{F}_{\alpha}^* \bm{E}_{\beta}\hat{C}\in \!\sum_{i=t+1}^{n-r}\!\!\bm{E}_iW\subset
	\!\sum_{i=r+1}^{n-r}\!\!\bm{E}_iW \quad (\beta\in L).
\end{equation}
Combining \eqref{interpolation}, \eqref{modified claim 1st half}, and \eqref{modified claim 2nd half}, we obtain \eqref{modified claim}, and this completes the proof.

\section{Examples}\label{sec: examples}

In this section, we mainly discuss additive codes over various translation association schemes (so that $x_0=0$).

\subsection{Codes with Hamming weight enumerators}
Recall that the \emph{Hamming weight} of $\bm{x}=(x_1,x_2,\dots,x_n)\in X^n$ is defined by
\begin{equation*}
	\operatorname{wt}(\bm{x})=|\{\ell:x_{\ell}\ne 0\}|.
\end{equation*}
The \emph{Hamming weight enumerator} of an additive code $C$ in $X^n$ is then defined by
\begin{equation*}
	\operatorname{hwe}_C(\xi_0,\xi_1)=\sum_{\bm{x}\in C}\xi_0^{n-\operatorname{wt}(\bm{x})}\xi_1^{\operatorname{wt}(\bm{x})}.
\end{equation*}
Thus, when working with the Hamming weight enumerator, we are considering codes over the $1$-class association scheme $(X,\{R_0,(X\times X)\backslash R_0\})$ with eigenmatrices
\begin{equation*}
	P=Q=\begin{bmatrix} 1 & |X|-1 \\ 1 & -1 \end{bmatrix}\!,
\end{equation*}
whose extension of length $n$ is the Hamming association scheme $H(n,|X|)$.
In particular, we have $\Te=\Th$ in this case.
Tanaka \cite{Tanaka2009EJC} showed the following:

\begin{thm}[{\cite[Theorem 5.2, Example 5.5]{Tanaka2009EJC}}]\label{Hamming AMT}
Let $C$ be a code in $X^n$.
Let
\begin{equation*}
	\delta=\min\{i\ne 0:\bm{E}_i^*\hat{C}\ne 0\}, \quad \delta^*=\min\{i\ne 0:\bm{E}_i\hat{C}\ne 0\}.
\end{equation*}
Suppose that an integer $t$ $(1\leqslant t\leqslant n)$ is such that, for every $1\leqslant r\leqslant t$, at least one of the following holds:
\begin{align}
	|\{i:r\leqslant i\leqslant n-r,\,\bm{E}_i^*\hat{C}\ne 0\}|&\leqslant\delta^*-r, \label{Hamming inequality} \\
	|\{i:r\leqslant i\leqslant n-r,\,\bm{E}_i\hat{C}\ne 0\}|&\leqslant\delta-r. \label{Hamming inequality*}
\end{align}
Then the multiset
\begin{equation*}
	\{\operatorname{supp}(\bm{x}):\bm{x}\in (X^n)_i\cap C\}
\end{equation*}
is a $t$-design for every $0\leqslant i\leqslant n$.
\end{thm}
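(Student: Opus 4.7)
The plan is to process each non-primary irreducible $\Th$-module $W \subset U_0$ of endpoint $r \leqslant t$ separately, using whichever of \eqref{Hamming inequality}, \eqref{Hamming inequality*} is available at that particular $r$. Since each $\bm{E}_i^*$ is a self-adjoint projection commuting with $\pi_W$ (as $W$ is $\Th$-invariant), Lemma \ref{characterization of designs} reduces the task to showing $\pi_W \hat{C} = 0$ for every such $W$. If \eqref{Hamming inequality} holds at $r$, then minimal-degree interpolation in one variable is just Lagrange interpolation, so $\mu(S_r) = |S_r| - 1 < \delta^* - r$; inspection of the proof of Theorem \ref{general AMT} shows that its inequality is invoked only at the specific endpoint $r$ of the module under consideration, so that proof directly yields $\pi_W \bm{E}_i^* \hat{C} = 0$ for every $i$.

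Suppose instead that \eqref{Hamming inequality*} holds at $r$. Set $T_r = \{j : r \leqslant j \leqslant n - r,\, \bm{E}_j \hat{C} \ne 0\}$, so $|T_r| \leqslant \delta - r$, and for each $i \in T_r$ pick $h_i \in \mathbb{R}[\xi]$ of degree at most $|T_r| - 1$ with $h_i(\theta_j) = \delta_{ij}$ on $T_r$. Expanding $h_i(\bm{A}_1) - \bm{E}_i = \sum_j (h_i(\theta_j) - \delta_{ij}) \bm{E}_j$ and applying this to $\hat{C}$, the terms with $j \in T_r$ vanish by construction, those with $j \in [r, n-r] \setminus T_r$ vanish because $\bm{E}_j \hat{C} = 0$, and those with $j \notin [r, n-r]$ are killed by $\pi_W$ since $W \subset \sum_{k=r}^{n-r} \bm{E}_k V^{\otimes n}$ by Lemma \ref{properties of irreducible modules}(iii). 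Hence $\pi_W h_i(\bm{A}_1) \hat{C} = \pi_W \bm{E}_i \hat{C}$. Because $H(n,|X|)$ is $P$-polynomial, $\bm{A}_1$ is tridiagonal with respect to the $\bm{E}_k^*$; combining this with $\hat{C} \in \mathbb{C}\hat{\bm{x}}_0 + \sum_{k \geqslant \delta} \bm{E}_k^* V^{\otimes n}$ and $\deg h_i \leqslant \delta - r - 1$ shows that $h_i(\bm{A}_1) \hat{C}$ lies in the primary $\Th$-module plus $\sum_{k \geqslant r+1} \bm{E}_k^* V^{\otimes n}$. The primary part drops out under $\pi_W$, so $v := \pi_W \bm{E}_i \hat{C}$ belongs to $\bm{E}_i W \cap \sum_{k = r+1}^{n-r} \bm{E}_k^* W$.

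To finish, I would run the dual pump on $v$. Assume $v \ne 0$ and let $\ell = \min\{k : \bm{E}_k^* v \ne 0\}$, so $\ell \geqslant r + 1$. On the one hand, since $v \in \bm{E}_i V$ is a $\bm{A}_1$-eigenvector with eigenvalue $\theta_i$, we have $\bm{E}_r^* (\bm{A}_1)^{\ell - r} v = \theta_i^{\ell - r} \bm{E}_r^* v = 0$. On the other hand, expanding via the tridiagonality of $\bm{A}_1$ leaves only the monotone lowering path $\ell \to \ell - 1 \to \cdots \to r$, which by Lemma \ref{properties of irreducible modules}(i),(iv) is a composition of non-zero maps between one-dimensional spaces, forcing $\bm{E}_r^* (\bm{A}_1)^{\ell - r} \bm{E}_\ell^* v \ne 0$---a contradiction. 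Hence $v = 0$ for every $i \in T_r$; for $i \notin T_r$, $\bm{E}_i \pi_W \hat{C} = \pi_W \bm{E}_i \hat{C}$ already vanishes (either $\bm{E}_i \hat{C} = 0$ or $\bm{E}_i W = 0$), so $\pi_W \hat{C} = \sum_i \bm{E}_i \pi_W \hat{C} = 0$. The main obstacle is setting up this dual pump: the trick is to pump the $\bm{A}_1$-eigenvector $\bm{E}_i \hat{C}$ rather than $\hat{C}$ itself, after which the argument proceeds cleanly regardless of whether $\theta_i$ happens to vanish (as can occur when $|X| = 2$, $n$ is even, and $i = n/2$).
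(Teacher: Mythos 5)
Your proposal is correct and follows essentially the same route as the paper, which derives the case \eqref{Hamming inequality} by specializing the proof of Theorem \ref{general AMT} to $s=1$ (where minimal-degree interpolation is ordinary Lagrange interpolation) and disposes of the case \eqref{Hamming inequality*} by ``the dual argument.'' You have simply written out that dual argument explicitly --- interpolating on the eigenvalues $\theta_j$, using the tridiagonality of $\bm{A}_1$ with respect to the $\bm{E}_k^*$ in place of that of $\bm{A}_1^*$ with respect to the $\bm{E}_k$, and running the pumping step via Lemma \ref{properties of irreducible modules}(i),(iv) instead of (ii),(v) --- together with the correct observation that the hypothesis is only needed at the endpoint $r$ of the module under consideration, which is what allows the two conditions to be mixed across different values of $r$.
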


Observe that Theorem \ref{Hamming AMT} strengthens the original Assmus--Mattson theorem (Theorem \ref{original AMT}).
In particular, it does not require that $C$ be linear nor additive.
The condition \eqref{Hamming inequality} agrees with \eqref{inequality} when $s=1$.
Indeed, the proof of Theorem \ref{general AMT} reduces to that of Theorem \ref{Hamming AMT} for \eqref{Hamming inequality}.
The dual argument shows the result for the case \eqref{Hamming inequality*}.
(It seems that the condition dual to \eqref{inequality} does not necessarily lead to the same conclusion as Theorem \ref{general AMT} when $s>1$.)
On the other hand, Supplements \ref{1st supp} and \ref{2nd supp} refine \cite[Remark 7.1]{Tanaka2009EJC}, and prove useful as we will see below.

\begin{ex}
The Assmus--Mattson-type theorem for additive codes over $\mathbb{F}_4$ given by Kim and Pless \cite[Theorem 2.7]{KP2003DCC} follows from Theorem \ref{Hamming AMT}, except their comment on the simplicity of the designs obtained from minimum weight codewords.
The additive group of $\mathbb{F}_4$ is isomorphic to the Klein four-group $\mathbb{Z}_2\times\mathbb{Z}_2$, and additive codes over $\mathbb{F}_4$ are the same thing as \emph{linear Kleinian codes} studied by H\"{o}hn \cite{Hohn2003MA}.
It should be noted that giving an (appropriate) inner product on $\mathbb{F}_4^n\cong(\mathbb{Z}_2\times\mathbb{Z}_2)^n$, on which concepts like self-orthogonality and self-duality depend, amounts to choosing a group isomorphism $\mathbb{Z}_2\times\mathbb{Z}_2\rightarrow(\mathbb{Z}_2\times\mathbb{Z}_2)^*$ satisfying the symmetry \eqref{symmetric form}.
This last remark applies to all examples that follow.
\end{ex}

\begin{ex}
Let $C$ be an extremal binary Type II code of length $n\equiv 8\ell\ (\mathrm{mod}\ 24)$ where $\ell\in\{0,1,2\}$.
From Theorem \ref{original AMT} (or Theorem \ref{Hamming AMT}) it follows that the words of any fixed weight in $C$ support a $t$-design with $t=5-2\ell$.
Using Bachoc's results on \emph{harmonic weight enumerators} \cite{Bachoc1999DCC}, Bannai, Koike, Shinohara, and Tagami \cite[Theorem 6, Remark 5]{BKST2006MMJ} showed that if one of these (non-trivial) designs is a $(t+1)$-design then so are the others.
This observation is also immediate from Supplement \ref{1st supp}.
We note that similar observations hold for extremal Type III codes over $\mathbb{F}_3$ and extremal Type IV codes over $\mathbb{F}_4$.
See also \cite{MN2016DCC}. 
\end{ex}

\begin{ex}
Additive codes over $\mathbb{Z}_4$ are also referred to as $\mathbb{Z}_4$-\emph{linear codes}.
For a $\mathbb{Z}_4$-linear code $C$ in $\mathbb{Z}_4^n$, let
\begin{equation*}
	C_2=(2\mathbb{Z}_4^n)\cap C,
\end{equation*}
which may also be viewed as a binary linear code (called the \emph{torsion code} of $C$) since $2\mathbb{Z}_4\cong\mathbb{Z}_2$.
We note that $\operatorname{hwe}_{C_2}$ is derived immediately from either the \emph{complete} or the \emph{symmetrized weight enumerators} of $C$; cf.~Subsection \ref{subsec: Z_4-codes}.
Shin, Kumar, and Helleseth \cite[Theorem 10]{SKH2004DCC} proved an Assmus--Mattson-type theorem for $\mathbb{Z}_4$-linear codes, and we now claim that Theorem \ref{Hamming AMT}, together with Supplements \ref{1st supp} and \ref{2nd supp}, always gives at least as good estimate on $t$ as their theorem.
First, they assume that $C_2$ and $(C^{\perp})_2$ both satisfy the conclusion of Theorem \ref{Hamming AMT}.
If a (Hamming) weight of $C$ is not a weight of $C\backslash C_2$, then the corresponding words of $C$ must all belong to $C_2$, and hence by Supplement \ref{1st supp} we can exclude that weight from the weights of $C$.
The same comment applies to $C^{\perp}$.
Second, they assume that the number of non-zero weights of the shortened code of $C^{\perp}\backslash (C^{\perp})_2$ at some $t$ coordinates is bounded above by $\delta-t$.
However, the conclusion of their theorem shows in the end that this number is equal to that of non-zero weights at most $n-t$ in $C^{\perp}\backslash (C^{\perp})_2$.
Hence it follows that this second condition is not weaker than \eqref{Hamming inequality*}.
\end{ex}

\begin{rem}\label{Goethals codes}
From the Assmus--Mattson-type theorem by Shin et al.~mentioned above (or Theorem \ref{Hamming AMT}) it follows that the words of any fixed weight in the Goethals code or its dual (a Delsarte--Goethals code) over $\mathbb{Z}_4$ of length $2^m$ with $m$ odd, support a $2$-design.
However, Shin et al.~\cite[Corollaries 7 and 8]{SKH2004DCC} showed that it is in fact a $3$-design, based on what they call an \emph{Assmus--Mattson-type approach}.
See also \cite{LRV2007FFA}.
\end{rem}

\subsection{Codes with complete/symmetrized weight enumerators}\label{subsec: Z_4-codes}

Let $C$ be an additive code over the ring $\mathbb{Z}_k$.
Besides $\operatorname{hwe}_C$, it is also important to consider the \emph{complete} and the \emph{symmetrized weight enumerators} defined respectively by
\begin{align*}
	\operatorname{cwe}_C(\xi_0,\xi_1,\dots,\xi_{k-1})&=\sum_{\bm{x}\in C}\xi_0^{n_0(\bm{x})}\xi_1^{n_1(\bm{x})}\!\!\!\dots\xi_{k-1}^{n_{k-1}(\bm{x})}, \\
	\operatorname{swe}_C(\xi_0,\xi_1,\dots,\xi_e)&=\sum_{\bm{x}\in C}\xi_0^{n_0(\bm{x})}\xi_1^{n_{\pm 1}(\bm{x})}\!\!\!\dots \xi_e^{n_{\pm e}(\bm{x})},
\end{align*}
where $e=\lfloor k/2\rfloor$,
\begin{align*}
	n_i(\bm{x}) &= |\{\ell:x_{\ell}=i\}| \quad (0\leqslant i\leqslant k-1), \\
	n_{\pm i}(\bm{x}) &= n_i(\bm{x})+n_{k-i}(\bm{x}) \quad (1\leqslant i\leqslant \lfloor (k-1)/2\rfloor),
\end{align*}
and we understand that $n_{\pm e}(\bm{x})=n_e(\bm{x})$ if $k$ is even.
Thus, for $\operatorname{cwe}_C$, the initial association scheme $(X,\mathcal{R})$ is the \emph{group association scheme} of $\mathbb{Z}_k$, which is the translation association scheme on $\mathbb{Z}_k$ defined by the partition (cf.~\eqref{partition})
\begin{equation*}
	\mathbb{Z}_k=\{0\}\sqcup\{1\}\sqcup\dots\sqcup\{k-1\},
\end{equation*}
and has eigenmatrices
\begin{equation*}
	P=\bigl[\zeta_k^{ij}\bigr]_{i,j=0}^{k-1}, \quad Q=\bigl[\zeta_k^{-ij}\bigr]_{i,j=0}^{k-1},
\end{equation*}
where $\zeta_k\in\mathbb{C}$ is a primitive $k^{\mathrm{th}}$ root of unity.
For $\operatorname{swe}_C$, $(X,\mathcal{R})$ is the association scheme of the \emph{ordinary $k$-cycle}, which is defined similarly by the partition
\begin{equation*}
	\mathbb{Z}_k=\{0\}\sqcup\{\pm 1\}\sqcup\dots\sqcup\{\pm e\},
\end{equation*}
and has eigenmatrices
\begin{equation*}
	P=Q=\bigl[(1+\delta_{0,2j})^{-1}(\zeta_k^{ij}+\zeta_k^{-ij})\bigr]_{i,j=0}^e,
\end{equation*}
where $\delta_{0,2j}$ is evaluated in $\mathbb{Z}_k$.
Extensions of the ordinary $k$-cycle are referred to as \emph{Lee association schemes} \cite{Tarnanen1987P,Sole1988P}.
We note that
\begin{align*}
	\operatorname{swe}_C(\xi_0,\xi_1,\dots,\xi_e) &= \operatorname{cwe}_C(\xi_0,\xi_1,\xi_2,\xi_3,\dots,\xi_2,\xi_1), \\
	\operatorname{hwe}_C(\xi_0,\xi_1) &= \operatorname{swe}_C(\xi_0,\xi_1,\dots,\xi_1),
\end{align*}
and that
\begin{equation*}
	\operatorname{hwe}_{C_2}(\xi_0,\xi_1)=\operatorname{swe}_C(\xi_0,0,\xi_1) \quad \text{when} \ k=4.
\end{equation*}

\begin{ex}
Our main results are in fact modeled after the Assmus--Mattson-type theorem due to Tanabe \cite[Theorem 2]{Tanabe2003DCC} for $\mathbb{Z}_4$-linear codes with respect to the symmetrized weight enumerator, so that the latter is a special case of the former.
In particular, we can easily find $5$-designs from the lifted Golay code over $\mathbb{Z}_4$ of length $24$ as discussed in \cite{Tanabe2003DCC}.
See also \cite{BRS2000JSPI}.
On the other hand, it is unclear at present whether or not Tanabe's original version of his theorem \cite[Theorem 3]{Tanabe2000IEEE} is a consequence of our results.
It would be an interesting problem to understand \cite[Theorem 3]{Tanabe2000IEEE} in terms of the irreducible $\Te$-modules; cf.~\cite{Morales2016LAA}.
\end{ex}

See \cite{HRY2001DM} for a survey on $t$-designs constructed from $\mathbb{Z}_4$-linear codes.

Below we discuss the extended quadratic residue codes $XQ_{11}$ of length $12$ over small finite fields.
That these codes support $3$-designs follows from the fact that their automorphism groups contain $\operatorname{PSL}(\mathbb{F}_{11}^2)$ and hence are $3$-homogeneous on the $12$ coordinates, but we include these examples in order to demonstrate the use of our results further.
Recall again that we only look at the weight enumerators (and linearity) of these self-dual codes.
We aim at doing the relevant computations \emph{by hand}.
The first example is a warm-up:

\begin{ex}
Consider $C=XQ_{11}$ over $\mathbb{F}_3=\mathbb{Z}_3$, which is the extended ternary Golay code.
We have $\operatorname{hwe}_C$ and $\operatorname{cwe}_C$ as follows: 

\medskip
\begin{center}
\small
\begin{tabular}{>{$}c<{$}|>{$}c<{$}>{$}c<{$}|>{$}c<{$}}
	\hline
	\operatorname{wt} & n_1 & n_2 & \text{\#words} \\
	\hline\hline
	0 & 0 & 0 & 1 \\
	\hline
	6 & 6 & 0 & 22 \\
	\cline{2-4}
	& 0 & 6 & 22 \\
	\cline{2-4}
	& 3 & 3 & 220 \\
	\hline
	9 & 6 & 3 & 220 \\
	\cline{2-4}
	& 3 & 6 & 220 \\
	\hline
	12 & 12 & 0 & 1 \\
	\cline{2-4}
	& 0 & 12 & 1 \\
	\cline{2-4}
	& 6 & 6 & 22 \\
	\hline
\end{tabular}
\end{center}
\medskip
As is well known, the words of fixed (Hamming) weight $6$ or $9$ support $5$-designs by Theorem \ref{original AMT}.
(The one with block size $9$ is the non-simple trivial design with constant multiplicity $2$.)
Set $t=3$.
We have $\delta^*=6$ and
\begin{equation*}
	S_1=S_2=S_3=\{(6,0),(0,6),(3,3),(6,3),(3,6)\}.
\end{equation*}
Observe that the words with $(n_1,n_2)=(6,3)$ and those with $(n_1,n_2)=(3,6)$ come in pairs by the correspondence $\bm{x}\mapsto -\bm{x}$, so that the words with each of these two complete weight types support the (simple!) trivial design.
Hence we may disregard them by Supplement \ref{1st supp}, i.e., we set $K=\{(6,3),(3,6)\}$.
Then $S_3\backslash K$ consists of three collinear points in $\mathbb{R}^2$, and thus we have $\mu(S_3\backslash K)=2$.
Since $2<6-3$, it follows from Theorem \ref{general AMT} that the non-trivial $5$-design with block size $6$ is partitioned into two $3$-designs (after discarding repeated blocks).
\end{ex}

\begin{ex}
Consider $C=XQ_{11}$ over $\mathbb{F}_5=\mathbb{Z}_5$.
We have $\operatorname{hwe}_C$ and $\operatorname{swe}_C$ as follows: 

\medskip
\begin{center}
\small
\begin{tabular}{>{$}c<{$}|>{$}c<{$\!\!\!}>{$}c<{$}|>{$}c<{$}}
	\hline
	\operatorname{wt} & n_{\pm 1} & n_{\pm 2} & \text{\#words} \\
	\hline\hline
	0 & 0 & 0 & 1 \\
	\hline
	6 & 3 & 3 & 440 \\
	\hline
	7 & 6 & 1 & 264 \\
	\cline{2-4}
	& 1 & 6 & 264 \\
	\hline
	8 & 4 & 4 & 2640 \\
	\hline
	9 & 7 & 2 & 1320 \\
	\cline{2-4}
	& 2 & 7 & 1320 \\
	\hline
	10 & 5 & 5 & 5544 \\
	\hline
	11 & 8 & 3 & 1320 \\
	\cline{2-4}
	& 3 & 8 & 1320 \\
	\hline
	12 & 11 & 1 & 24 \\
	\cline{2-4}
	& 1 & 11 & 24 \\
	\cline{2-4}
	& 6 & 6 & 1144 \\
	\hline
\end{tabular}
\end{center}
\medskip
We have $\delta^*=6$.
Observe that Theorem \ref{original AMT} nor Theorem \ref{Hamming AMT} cannot find designs from the supports of the codewords in this case.
On the other hand, set $t=3$, and take $\sigma\in\operatorname{GL}(\mathbb{R}^2)$ such that $\sigma(i,j)=(1/5)(2i+3j,i-j)$.
Then we have
\begin{align*}
	\sigma(S_1) &= \left\{\!\!\begin{array}{l} (6,-1), \\ (5,-1),(5,0),(5,1), \\ (4,-1),(4,0),(4,1), \\ \phantom{(3,-1),\,}(3,0),(3,1) \end{array}\!\!\right\}, \\
	\sigma(S_2) &= \left\{\!\!\begin{array}{l} (5,-1),(5,0), \\ (4,-1),(4,0),(4,1), \\ \phantom{(3,-1),\,}(3,0),(3,1) \end{array}\!\!\right\}, \\
	\sigma(S_3) &= \left\{\!\!\begin{array}{l} (5,-1), \\ (4,-1),(4,0),(4,1), \\ \phantom{(3,-1),\,}(3,0),(3,1) \end{array}\!\!\right\}.
\end{align*}
From Supplement \ref{upper bound} it follows that $\mu(S_1)\leqslant 4$ and $\mu(S_2)\leqslant 3$.
If we apply Supplement \ref{upper bound} directly to $\sigma(S_3)$ then we would only obtain $\mu(S_3)\leqslant 3$, but indeed it follows that $\mu(S_3)=2$.
To see this, let
\begin{align*}
	f_{(5,-1)} &= (\xi_1-3)(\xi_1-4)/2, \\
	f_{(4,-1)} &= -(\xi_1+\xi_2-4)(\xi_1-\xi_2-3)/2, \\
	f_{(4,1)} &= (\xi_1+\xi_2-3)(\xi_1+\xi_2-4)/2, \\
	f_{(3,0)} &= (\xi_1-4)(\xi_1+\xi_2-4), \\
	f_{(3,1)} &= -(\xi_1-4)(\xi_1+2\xi_2-3)/2, \\
	f_{(4,0)} &= 1-f_{(5,-1)}-f_{(4,-1)}-f_{(4,1)}-f_{(3,0)}-f_{(3,1)}.
\end{align*}
Then we have
\begin{equation*}
	f_{\alpha}(\beta)=\delta_{\alpha\beta} \quad (\alpha,\beta\in\sigma(S_3)),
\end{equation*}
from which it follows that the linear span of the $f_{\alpha}$ $(\alpha\in\sigma(S_3))$ is an interpolation space with respect to $\sigma(S_3)$.
This shows $\mu(S_3)=2$, as desired.
Thus, the condition \eqref{inequality} is satisfied for $r\in\{1,2,3\}$.
Theorem \ref{general AMT} now shows that the codewords of any fixed symmetrized weight type support $3$-designs.
This example tells us that looking at $\operatorname{swe}_C$ may sometimes give a better estimate on $t$ than $\operatorname{hwe}_C$, even when Supplement \ref{1st supp} is not applicable.
\end{ex}

Finally, we consider $C=XQ_{11}$ over $\mathbb{F}_4=\{0,1,\omega,\omega^2\}$.
Note that $\operatorname{cwe}_C$ makes sense by defining $n_{\omega}(\bm{x})$ and $n_{\omega^2}(\bm{x})$ in the same manner as above.
The eigenmatrices of the group association scheme of $\mathbb{F}_4$ are given by
\begin{equation*}
	P=Q=\begin{bmatrix} 1 & 1 & 1 & 1 \\ 1 & 1 & -1 & -1 \\ 1 & -1 & -1 & 1 \\ 1 & -1 & 1 & -1 \end{bmatrix}.
\end{equation*}

\begin{ex}
Consider $C=XQ_{11}$ over $\mathbb{F}_4$.
We have $\operatorname{hwe}_C$ and $\operatorname{cwe}_C$ as follows:

\medskip
\begin{center}
\small
\begin{tabular}{>{$}c<{$}|>{$}c<{$}>{$}c<{$}>{$}c<{$}|>{$}c<{$}}
	\hline
	\operatorname{wt} & n_1 & n_{\omega} & n_{\omega^2} & \text{\#words} \\
	\hline\hline
	0 & 0 & 0 & 0 & 1 \\
	\hline
	6 & 2 & 2 & 2 & 330 \\
	\hline
	7 & 5 & 1 & 1 & 132 \\
	\cline{2-5}
	& 1 & 5 & 1 & 132 \\
	\cline{2-5}
	& 1 & 1 & 5 & 132 \\
	\hline
	8 & 4 & 4 & 0 & 165 \\
	\cline{2-5}
	& 4 & 0 & 4 & 165 \\
	\cline{2-5}
	& 0 & 4 & 4 & 165 \\
	\hline
	9 & 3 & 3 & 3 & 1320 \\
	\hline
	10 & 6 & 2 & 2 & 330 \\
	\cline{2-5}
	& 2 & 6 & 2 & 330 \\
	\cline{2-5}
	& 2 & 2 & 6 & 330 \\
	\hline
	11 & 5 & 5 & 1 & 132 \\
	\cline{2-5}
	& 5 & 1 & 5 & 132 \\
	\cline{2-5}
	& 1 & 5 & 5 & 132 \\
	\hline
	12 & 12 & 0 & 0 & 1 \\
	\cline{2-5}
	& 0 & 12 & 0 & 1 \\
	\cline{2-5}
	& 0 & 0 & 12 & 1 \\
	\cline{2-5}
	& 4 & 4 & 4 & 165 \\
	\hline
\end{tabular}
\end{center}
\medskip
We have $\delta^*=6$.
Again, Theorem \ref{original AMT} cannot find designs from the supports of the codewords.
Take $\sigma\in\operatorname{GL}(\mathbb{R}^3)$ such that
\begin{equation*}
	\sigma(i,j,k)=(1/4)(2i+j+k,i+2j+k,i+j+2k).
\end{equation*}
Then we have
\begin{align*}
	\sigma(S_3) &= \left\{\!\!\begin{array}{l} (2,2,2),(2,2,3),(2,3,2),(2,3,3), \\ (3,2,2),(3,2,3),(3,3,2),(3,3,3) \end{array}\!\!\right\}, \\
	\sigma(S_2) &= \sigma(S_3)\sqcup\{(4,3,3),(3,4,3),(3,3,4)\}, \\
	\sigma(S_1) &= \sigma(S_2)\sqcup\{(4,4,3),(4,3,4),(3,4,4)\}.
\end{align*}
We claim that $\mu(S_1)\leqslant 4$ and that $\mu(S_2)=\mu(S_3)=3$.
First, it is easy to see that $\mu(S_3)=3$ as $\sigma(S_3)$ forms a cube.
Next, let
\begin{equation*}
	f_{(4,3,3)}=(\xi_1-2)(\xi_1-3)/2.
\end{equation*}
Then we have
\begin{equation*}
	f_{(4,3,3)}(\alpha)=\delta_{(4,3,3),\alpha} \quad (\alpha\in \sigma(S_2)).
\end{equation*}
We similarly define $f_{(3,4,3)}$ and $f_{(3,3,4)}$.
Recall that $\mathscr{M}(\sigma(S_3))$ denotes a minimal degree interpolation space with respect to $\sigma(S_3)$.
Then it is immediate to see that
\begin{equation*}
	\mathscr{M}(\sigma(S_3))+\mathbb{R}f_{(4,3,3)}+\mathbb{R}f_{(3,4,3)}+\mathbb{R}f_{(3,3,4)}
\end{equation*}
is an interpolation space with respect to $\sigma(S_2)$.
Since $\mu(S_3)\leqslant \mu(S_2)$, we have $\mu(S_2)=3$.
Finally, let for example
\begin{equation*}
	f_{(4,4,3)}=(\xi_1+\xi_2-4)(\xi_1+\xi_2-5)(\xi_1+\xi_2-6)(\xi_1+\xi_2-7)/24,
\end{equation*}
so that we have
\begin{equation*}
	f_{(4,4,3)}(\alpha)=\delta_{(4,4,3),\alpha} \quad (\alpha\in \sigma(S_1)),
\end{equation*}
and a similar argument establishes $\mu(S_1)\leqslant 4$, as desired.
Thus, the condition \eqref{inequality} is satisfied for $r\in\{1,2\}$ but fails for $r=3$.
Theorem \ref{general AMT} now shows that the codewords of any fixed complete weight type support $2$-designs.
Though this is not the best estimate (i.e., $t=3$), Theorem \ref{general AMT} still outperforms Theorem \ref{original AMT} for this example.
\end{ex}

\section*{Acknowledgments}

The authors thank Masaaki Harada for helpful discussions.
HT was supported in part by JSPS KAKENHI Grant No.~25400034.

\end{document}